\newtheorem{theorem}{Theorem}
\newtheorem*{theorem*}{Theorem}
\newtheorem*{proposition*}{Proposition}
\newtheorem{proposition}[theorem]{Proposition}
\newtheorem{lemma}[theorem]{Lemma}
\newtheorem{corollary}[theorem]{Corollary}
\newtheorem*{corollary*}{Corollary}
\newtheorem*{claim*}{Claim}
\newtheorem{definition}{Definition}
\theoremstyle{remark}
\newtheorem{remark}{Remark}
\newtheorem*{remark*}{Remark}
\pgfplotsset{compat=newest}
\title{Existence Results for the Nonlinear Hodge Minimal Surface Energy}
\author{Daniel Agress}
\address{Department of Mathematics, University of California Irvine, 92617} 
\email{dagress@uci.edu}
\date{3/5/2018}
\begin{document}

\begin{abstract}
Given a compact Riemannian manifold $(M^n,g)$ and a fixed cohomology class, $[\alpha^*] \in H^k(M)$, we consider the existence of a minimizer $\alpha \in [\alpha^*]$ of the generalized minimal surface energy $\int_M \sqrt{1+|\alpha|^2} dV_g$. When $k = 1$, we prove the existence of unique minimizers for every cohomology class $[\alpha^*]$. Next, when $k > 1$, we construct examples of singular solutions for finite cohomology class $[\alpha^*] \in H^k(S^k \times S^k,g)$, where $g$ is conformal to the standard metric on $S^k \times S^k$. Additionally, we show that when $k=2$, these singular solutions are also solutions to the Born Infeld equation.
\end{abstract}

\maketitle

\section{Introduction} \label{intro}

Given a compact Riemannian manifold $(M^n,g)$, $\alpha \in \Lambda^k(M)$, Hodge theory studies the critical points of the energy
$$E(\alpha) = \int_M |\alpha|^2 dV_g$$
within a fixed cohomology class $[\alpha^*]$. In  \cite{sibnerExistence2},\cite{sibnerExistence1}, \cite{sibnerExample1}, and \cite{sibnerExample2}, Robert and Lesley Sibner studied the nonlinear Hodge problem, which, given a function $f : \mathbb{R}^+ \rightarrow \mathbb{R}$, studies the existence of critical points of the nonlinear Hodge energy
$$E_f(\alpha) = \int_M f(|\alpha|) dV_g$$
over a fixed cohomology class. One important case of a nonlinear Hodge energy, which we will call the generalized minimal surface energy (GMS), is given by
$$ E_{GMS} (\alpha) = \int_M \sqrt{1+|\alpha|^2} dV_g$$
This energy finds applications in several diverse settings. 
\begin{enumerate}
\item When $\alpha = du$ is an exact one form then the minimal surface energy is simply the area of the graph $\{(x,u(x))\} \subseteq M \times \mathbb{R}$. Thus, minimizers of the GMS energy correspond to graphical minimal surfaces in $M \times \mathbb{R}$. If $\alpha$ is not exact, minimizers of the GMS energy correspond multivalued minimal graphs in $M \times \mathbb{R}$ whose equivariant gradient is the one form $\alpha$. These graphs can be lifted to entire minimal surfaces over the universal cover $\tilde{M}$ whose gradient is equivariant over the group of deck transformations of $M$. 
\item In the case $\alpha = du \in \Lambda^1(M)$, the energy $\int_M |du| dV_g$ is known as the total variation, $TV(du)$. Functions $u \in L^1(M)$ with $TV(du) < \infty$ are known as the functions of bounded variation. The space of BV functions has been extensively studied for its relevance to minimal hypersurfaces (see Giusti \cite{giusti}) and in image processing (see \cite{ambrosio}). As we will discuss in Section \ref{limits}, many of these results can be generalized to the case where $\alpha \in \Lambda^1(M)$ is not exact. 

The $t-GMS$ energy, $E^t_{GMS}(\alpha) := \int_M \sqrt{t^{-2} + |\alpha|^2}dV_g$ provides a regularization of the total variation. We will show in Theorem \ref{thm2} below that as $t \rightarrow \infty$, $E^t_{GMS}$ $\Gamma$-converges to the total variation and that minimizers converge weakly to a minimizer of the total variation. 

\item In 1934, in \cite{born}, Born and Infeld introduced a nonlinear theory of electromagnetism with the Lagrangian  $\int_M \sqrt{\det(g-F)}$. Here, $g$ is the metric and $F \in \Lambda^2(M)$ is the electromagnetic field. This Lagrangian has relevance to contemporary physics in the context of electromagnetic fields in string theory. When $M$ is a three dimensional Riemannian manifold, the Born Infeld energy reduces to the GMS energy, $\int_M \sqrt{1+|F|^2}dV_g$. Thus, GMS $2$-forms are magnetostatic solutions of the Born Infeld theory of electromagnetism. We will see in section \ref{BI} that in special cases, minimizers of the GMS energy also minimize the full Born Infeld energy in four dimensions. 
\end{enumerate}

As mentioned earlier, in \cite{sibnerExistence2} and \cite{sibnerExistence1} Sibner and Sibner studied the general nonlinear Hodge problem. Their results, as we will describe in Section \ref{sibnerexistence}, show that for small enough cohomology classes there exists a smooth minimizer of the minimal surface energy. However, they leave open the question of whether solutions become singular for large cohomology class. One simple case in which a solution exists for every cohomology class is when the harmonic form of the cohomology class is parallel, in which case it is automatically a GMS solution. In particular, for $M$ with non-negative curvature, there is a GMS solution in every cohomology class. However, for more general compact Riemannian manifolds, the question of the existence of solutions for large cohomology class was open. Our results resolve this question by showing that for $k=1$, a smooth solution exists in every cohomology class, whereas for $k>1$, singular solutions exist for finite cohomology class. 
\begin{theorem}\label{thm1}
Given a compact Riemannian manifold $(M^n,g)$ and a fixed cohomology class $[\alpha^*] \in H^1(M)$, for every $t \in \mathbb{R}$, there exists a unique $\alpha_t \in [t\alpha^*]$ which minimizes the generalized minimal surface energy 
$$ E_{GMS} (\alpha) = \int_M \sqrt{1+|\alpha|^2} dV_g$$
over the cohomology class $[t\alpha^*]$.
\end{theorem}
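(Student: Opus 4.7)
My approach is to apply the direct method in $BV$, exploiting convexity of the integrand. Using the Hodge decomposition, I first fix a smooth representative $h_t$ of $[t\alpha^*]$ (say the harmonic one) and write every other representative as $\alpha = h_t + d\phi$ for a function $\phi$, unique modulo constants. The problem reduces to minimizing
$$F(\phi) := \int_M \sqrt{1 + |h_t + d\phi|^2}\, dV_g$$
over $\phi$ normalized by $\int_M \phi\, dV_g = 0$. Because $\xi \mapsto \sqrt{1+|\xi|_g^2}$ is strictly convex on each fiber $T_x^*M$, $F$ is strictly convex in $d\phi$, which gives uniqueness automatically once existence is in hand: two distinct minimizers $\phi^{(1)}, \phi^{(2)}$ would force $(\phi^{(1)}+\phi^{(2)})/2$ to have strictly smaller energy.

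For existence I run the direct method. The pointwise bound $\sqrt{1+|\xi|^2} \geq |\xi|$ yields
$$\|d\phi\|_{L^1(M)} \leq F(\phi) + \|h_t\|_{L^1(M)},$$
so any minimizing sequence $\{\phi_n\}$ satisfies $\|d\phi_n\|_{L^1} \leq C$, and combined with the Poincaré inequality for zero-mean $BV$ functions on the compact manifold $M$ this gives a uniform bound on $\|\phi_n\|_{BV(M)}$. The compact embedding $BV(M)\hookrightarrow L^1(M)$ then extracts a subsequence $\phi_n \to \phi_\infty$ in $L^1$, with $\phi_\infty \in BV(M)$.

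To close the argument I invoke the $L^1$-lower semicontinuity on $BV(M)$ of the relaxed area functional
$$\overline{F}(\phi) := \int_M \sqrt{1 + |h_t + \nabla^a \phi|^2}\, dV_g + |D^s \phi|(M),$$
where $D\phi = \nabla^a \phi\, dV_g + D^s \phi$ is the Lebesgue decomposition of the distributional derivative with respect to $dV_g$. Together with the density of $C^\infty(M)$ in $BV(M)$ in the strict topology, on which $\overline{F}$ is continuous, this yields $\overline{F}(\phi_\infty) = \inf F$, producing a minimizing form $\alpha_t \in [t\alpha^*]$, with uniqueness inherited from the strict convexity.

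The main obstacle is technical rather than conceptual: the Euclidean relaxation and lower-semicontinuity theory for the area integrand (due to Goffman--Serrin and Reshetnyak) must be transferred to $(M,g)$ via a partition-of-unity argument, and one must verify that the relaxation creates no gap with the smooth infimum. Given the existence of the $BV$ minimizer, standard regularity theory for the minimal surface equation applied to the Euler--Lagrange system $d^*\bigl(\alpha_t/\sqrt{1+|\alpha_t|^2}\bigr) = 0$ upgrades the minimizer to a smooth form, justifying the smoothness claim made in the introduction.
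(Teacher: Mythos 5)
Your route (direct method in $BV$ plus relaxation) is genuinely different from the paper's, which instead runs the Sibner--Sibner continuation method: Theorem \ref{sibnerthm} provides solutions $\alpha_t$ for $t<T$ with $\sup_M|\alpha_t|\to\infty$ as $t\to T$ whenever $T<\infty$, so the entire proof reduces to an a priori $C^0$ bound on $|\alpha_t|$, obtained by writing $\alpha_t=du_t$ on small balls, bounding the oscillation of $u_t$ via the monotonicity formula (Theorem \ref{volumethm}) and the energy bound from the competitor $t\alpha^*$, and then invoking Spruck's interior gradient estimate for minimal graphs in $M\times\mathbb{R}$ (Theorem \ref{spruckthm}). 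The first half of your argument --- reduction to zero-mean $\phi$, coercivity from $\sqrt{1+|\xi|^2}\geq|\xi|$, $BV$ compactness, Reshetnyak/Goffman--Serrin lower semicontinuity of the relaxed functional, and strict density of smooth functions --- is sound modulo the partition-of-unity transfer you yourself flag.

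The genuine gap is the final sentence, where ``standard regularity theory'' is invoked to upgrade the minimizer to a smooth form. What the direct method produces is a $BV$ minimizer $\phi_\infty$ of the \emph{relaxed} functional, i.e.\ a generalized solution of the minimal surface equation whose local potential may a priori carry a nonzero singular part $D^s\phi_\infty$ (vertical pieces of the generalized graph, including sets where the potential is $\pm\infty$). Upgrading this to a classical solution requires two nontrivial inputs: (i) local boundedness and the vanishing of the jump and Cantor parts, which is the content of the structure theory of generalized solutions (Giusti, Ch.~14) and is exactly what the paper's oscillation estimate via the monotonicity formula accomplishes; and (ii) an interior gradient estimate of Bombieri--De Giorgi--Miranda type, which on a manifold is precisely Spruck's theorem. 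So the ``standard regularity'' step is not a routine citation but the entire analytic content of the theorem, and it must be carried out in $M\times\mathbb{R}$ rather than $\mathbb{R}^{n+1}$; your stated ``main obstacle'' (transferring lower semicontinuity to manifolds) is the easy part. A secondary issue: strict convexity gives uniqueness only among absolutely continuous competitors, since on the singular part the relaxed integrand is the recession function $|\xi|$, which is convex but not strictly convex; hence uniqueness of the $BV$ minimizer does not follow from your midpoint argument until after regularity has been established.
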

We can also study the rescaled minimizers $\beta_t = t^{-1} \alpha_t \in [\alpha^*]$. We will see that $\beta_t$ minimizes the rescaled $t-GMS$ energy, $E^t_{GMS}(\alpha) = \int_M \sqrt{t^{-2} + |\alpha|^2} dV_g$. With the aim of understanding the minimizers of the BV energy, we can study the limit as $t \rightarrow 0$ and $t \rightarrow \infty$. We prove

\begin{theorem}\label{thm2}
Given a compact Riemannian manifold $(M^n,g)$ and a cohomology class $[\alpha^*] \in H^1(M)$, let $\beta_t$ be minimizers of the $t$-GMS energy
$$E^t_{GMS}(\alpha) = \int_M \sqrt{t^{-2} + |\alpha|^2} dV_g$$
over the cohomology class $[\alpha^*]$. Let $\alpha_H$ be the harmonic form in the class $[\alpha^*]$. Then,
\begin{enumerate}
\item As $t \rightarrow 0$, $\beta_t \xrightarrow{C^{\infty}} \alpha_H$.
\item If $t_k \rightarrow \infty$, $\exists$ a subsequence $t_{k_n}$ and a BV minimizer $\alpha_0 \in [\alpha^*]$ such that  $\beta_{t_{k_n}} \xrightarrow{WBV} \alpha_0$.
\end{enumerate}
\end{theorem}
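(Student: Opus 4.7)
For part (1), my plan is to apply the implicit function theorem at $t=0$ after rescaling to the trivial solution. Letting $\alpha_t := t\beta_t$ denote the unique minimizer of $\int_M\sqrt{1+|\alpha|^2}\,dV_g$ over $[t\alpha^*]$ supplied by Theorem~\ref{thm1}, and decomposing $\alpha_t = t\alpha_H + d\psi_t$ with $\int_M \psi_t\,dV_g = 0$, the Euler--Lagrange equation takes the form
\[
F(t,\psi) \;:=\; d^*\!\left(\frac{t\alpha_H + d\psi}{\sqrt{1+|t\alpha_H+d\psi|^2}}\right) \;=\; 0,\qquad F(0,0)=0.
\]
The partial linearization $D_\psi F(0,0) = d^*d = -\Delta$ is a Banach-space isomorphism from mean-zero $C^{k+2,\alpha}(M)$ onto mean-zero $C^{k,\alpha}(M)$ for every $k$, so IFT produces a unique smooth branch $t\mapsto\psi_t$ through $(0,0)$, which by uniqueness in Theorem~\ref{thm1} coincides with the minimizer. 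The symmetry $E_{GMS}(\alpha)=E_{GMS}(-\alpha)$ forces the branch to be odd in $t$, hence $\psi_t=O(t^3)$; combined with $\partial_t F(0,0)=d^*\alpha_H=0$ (since $\alpha_H$ is harmonic), this yields $\beta_t = \alpha_H + t^{-1}d\psi_t = \alpha_H + O(t^2)$ in every $C^{k,\alpha}$, hence $C^\infty$ convergence.

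For part (2) the approach is purely variational. The pointwise sandwich $|\beta|\leq\sqrt{t^{-2}+|\beta|^2}\leq t^{-1}+|\beta|$, tested against the harmonic competitor $\alpha_H$, gives
\[
\int_M|\beta_t|\,dV_g \;\leq\; E^t_{GMS}(\beta_t) \;\leq\; E^t_{GMS}(\alpha_H) \;\leq\; t^{-1}\mathrm{Vol}(M) + \|\alpha_H\|_{L^1},
\]
so $\|\beta_t\|_{L^1}$ is uniformly bounded for $t\geq 1$. Decomposing $\beta_t = \alpha_H + df_t$ with $\int_M f_t\,dV_g = 0$ turns this into a uniform $BV$ bound on the scalar $f_t$. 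Classical $BV$ compactness on the compact manifold $M$ then extracts a subsequence with $f_{t_{k_n}}\to f_0$ in $L^1$ and $df_{t_{k_n}}\xrightarrow{*}df_0$ weakly as Radon measures, i.e.\ $\beta_{t_{k_n}}\xrightarrow{WBV}\alpha_0 := \alpha_H + df_0$. Lower semicontinuity of the total variation under weak-$*$ convergence, combined with the sandwich above and minimality, gives, for any smooth competitor $\alpha\in[\alpha^*]$,
\[
\int_M|\alpha_0|\;\leq\;\liminf_n\int_M|\beta_{t_{k_n}}|\;\leq\;\liminf_n E^{t_{k_n}}_{GMS}(\beta_{t_{k_n}}) \;\leq\;\liminf_n E^{t_{k_n}}_{GMS}(\alpha)\;\leq\;\int_M|\alpha|\,dV_g.
\]
Extending by density to all $BV$ competitors shows $\alpha_0$ is a $BV$ minimizer in $[\alpha^*]$.

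The most delicate step I anticipate is in part (1): verifying the hypotheses of the implicit function theorem in the right functional-analytic setting and matching the local IFT branch with the a priori only globally minimizing family produced by Theorem~\ref{thm1}. For part (2) the main subtlety is conceptual, namely giving precise meaning to $WBV$ convergence and to the $BV$ extension of the cohomology class $[\alpha^*]$, but the scalar Hodge-type decomposition $\beta_t = \alpha_H + df_t$ immediately reduces everything to the classical $BV$ theory on compact manifolds.
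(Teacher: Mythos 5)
Your proposal is correct, but part (1) follows a genuinely different route from the paper, so a comparison is in order. For the $t\to 0$ limit the paper does not use the implicit function theorem: it writes the Euler--Lagrange equation for $\beta_t=\alpha^*+df_t$ as a second-order elliptic PDE whose coefficients involve $\alpha_t=t\beta_t$, invokes the continuity statement of Sibner's theorem (Theorem \ref{sibnerthm}(3)) to get $\alpha_t\to 0$ uniformly and hence uniform $C^{k,\gamma}$ control of the coefficients, applies Schauder estimates to bound $f_t$ in $C^{k+2,\gamma}$, and concludes by Arzel\`a--Ascoli plus identification of the limit equation $\Delta f_0=-d^*\alpha^*$. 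Your IFT argument replaces all of this: the linearization $d^*d$ on mean-zero H\"older spaces is an isomorphism, the branch of critical points it produces coincides with the minimizer by the strict convexity/uniqueness established in the paper's Lemma 1 (any critical point in the class is the unique minimizer, so you do not need to verify a priori that the minimizer lies in the IFT neighborhood), and $\partial_tF(0,0)=d^*\alpha_H=0$ together with the oddness of the branch even yields the quantitative rate $\beta_t=\alpha_H+O(t^2)$ in every $C^{k,\alpha}$, which the paper's compactness argument does not give. The trade-off is that the paper's route leans on the already-quoted Sibner continuity result, whereas yours requires checking smoothness of the Nemytskii-type map $F$ between H\"older spaces and running the IFT at each order $k$; both are routine. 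For part (2) your argument is essentially the paper's: BV compactness after the decomposition $\beta_t=\alpha_H+df_t$, lower semicontinuity of the (twisted) total variation for the liminf inequality, and smooth competitors with the pointwise bound $\sqrt{t^{-2}+|\alpha|^2}\le t^{-1}+|\alpha|$ plus density of smooth functions in the weak BV topology for the limsup inequality; the paper phrases the second half as a contradiction but the content is identical. One small caution: weak-$*$ convergence of $df_{t_{k_n}}$ as measures is strictly weaker than the $WBV$ convergence defined in Section \ref{limits}, which also demands convergence of the total variations; your sandwich of inequalities does deliver $\int_M|\beta_{t_{k_n}}|\to TV_{\alpha^*}(df_0)$ once minimality of $\alpha_0$ is established, so the conclusion is fine, but that step should be stated explicitly rather than folded into the ``i.e.''.
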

\begin{remark*} 
Here, the $WBV$ (weak BV) convergence is as described in section \ref{limits}. In fact, we prove that the $t-GMS$ energy $\Gamma$-converges (see \cite{braides}) to the total variation as $t \rightarrow \infty$. From Theorem \ref{thm2}, we see that the $t-GMS$ minimizers provide a smooth one parameter family of forms which link the harmonic form and a BV minimizer of the cohomology class. 
\end{remark*}

We next show that although a solution exists for every cohomology class in $H^1(M)$, for general $k$-forms this is not true. There are large cohomology classes in $H^k(M)$ where no GMS solution exists. Thus, the small cohomology existence result of \cite{sibnerExistence2} is optimal for higher degree differential forms. We prove this by constructing explicit families of solutions which become singular for finite cohomology class.
\begin{theorem}\label{thm3}
There exists a metric on $S^k \times S^k$, $\Lambda > 0$, and a cohomology class $[\alpha^*] \in H^k(S^k \times S^k)$ such that a smooth GMS solution exists in the cohomology class $[t \alpha^*]$ iff $|t| < \Lambda$.
\end{theorem}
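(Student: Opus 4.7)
The plan is to engineer a conformal perturbation of the product round metric on $S^k\times S^k$ whose isometry group is large enough to reduce the GMS Euler-Lagrange PDE to a pointwise algebraic relation, and then to identify the threshold $\Lambda$ at which that relation becomes unsatisfiable.

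\emph{Setup.} Let $g_0$ be the product of round metrics on $S^k\times S^k$, and pick a smooth $SO(k)$-invariant function $h:S^k\to(0,\infty)$, depending only on the geodesic distance $\rho$ from the north pole, with a strict nondegenerate minimum at the south pole (e.g.\ $h(\rho) = 2+\cos\rho$). Equip $S^k\times S^k$ with the conformal metric $g:=h^2 g_0$, and take $[\alpha^*]:=[\omega_1]$, the pullback of the round volume form from the first factor.

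\emph{Symmetry reduction.} Strict convexity of $\xi\mapsto\sqrt{1+|\xi|^2}$ makes $E_{GMS}$ strictly convex on the affine space of closed forms in each cohomology class, so any smooth minimizer is unique. The isometry group of $(S^k\times S^k, g)$ contains $G=SO(k)\times SO(k+1)$, which acts trivially on cohomology, so any smooth minimizer in $[t\omega_1]$ must be $G$-invariant. A direct classification shows the $G$-invariant closed $k$-forms in $[t\omega_1]$ are exactly those of the form $\alpha=\phi(\rho)\,\omega_1$ with $\phi\in C^\infty([0,\pi])$ of mean $t$ with respect to $dV_{g_{S^k}}$.

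\emph{Reduction to an algebraic equation.} For $k$-forms on an $n=2k$ manifold, conformal scaling gives $\star_g=\star_{g_0}$ on $k$-forms while $|\alpha|_g^2=h^{-2k}\phi^2$. Introducing $\psi:=\phi/\sqrt{1+h^{-2k}\phi^2}$, the equation $d^*_g\bigl(\alpha/\sqrt{1+|\alpha|^2_g}\bigr)=0$ reduces (up to sign) to $d(\psi\,\omega_2)=\psi'(\rho)\,d\rho\wedge\omega_2=0$, which forces $\psi$ to be a constant $c$. Inverting the defining relation gives
\begin{equation*}
\phi(\rho)=\frac{c\,h(\rho)^k}{\sqrt{h(\rho)^{2k}-c^2}},\qquad |c|<h_{\min}^k.
\end{equation*}

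\emph{The threshold $\Lambda$ and nonexistence.} Set $T(c):=\frac{1}{\mathrm{Vol}(S^k)}\int_{S^k}\phi(\rho;c)\,dV_{g_{S^k}}$; then $T$ is smooth and strictly increasing in $|c|$ on $(-h_{\min}^k, h_{\min}^k)$ with $T(0)=0$, and I define $\Lambda:=\lim_{|c|\uparrow h_{\min}^k} T(c)$. For $|t|<\Lambda$ there is a unique admissible $c$, producing a smooth minimizer; for $|t|\ge\Lambda$ no admissible $c$ exists, and by uniqueness together with $G$-invariance no smooth solution in $[t\omega_1]$ can exist. The crux of the argument is showing $\Lambda$ is \emph{finite}: near $\rho=\pi$ one has $h^{2k}-h_{\min}^{2k}\sim(\pi-\rho)^2$ and $\sin^{k-1}\!\rho\sim(\pi-\rho)^{k-1}$, so the limiting integrand behaves like $(\pi-\rho)^{k-2}$, which is integrable \emph{exactly} when $k\ge 2$; this is where the hypothesis $k>1$ enters essentially, and is consistent with Theorem~\ref{thm1}, where the corresponding integral diverges logarithmically. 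This singular-integral estimate is the main technical obstacle; the remainder of the proof is convexity-plus-symmetry bookkeeping.
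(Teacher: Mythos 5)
Your proposal is correct and follows essentially the same route as the paper: a rotationally symmetric conformal deformation of the product round metric, uniqueness from strict convexity plus isometry-invariance to force the solution into the radial ansatz $\phi(\rho)\,\omega_1$, reduction of the Euler--Lagrange equation to $\psi\equiv c$, and the threshold $\Lambda$ identified as the limit of the monotone map $c\mapsto T(c)$, finite precisely because the limiting integrand behaves like $(\pi-\rho)^{k-2}$ near the nondegenerate critical point of $h$ when $k\ge 2$. The only differences from the paper's argument (Proposition~\ref{cohomology}, Theorem~\ref{explicit}, and the ensuing corollary) are cosmetic: you write the conformal factor as $h^{2}$ where the paper uses $h^{-2}$, and you place the radial dependence on the first sphere factor rather than the second.
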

We also show that in the case $k=2$, these explicit solutions minimize the Born Infeld energy. Using Theorem \ref{thm3}, we then show that
\begin{theorem}
There exists a metric on $S^2 \times S^2$ and a cohomology class $[\alpha^*] \in H^2(S^2 \times S^2)$ which contains no smooth minimizer of the Born Infeld energy. 
\end{theorem}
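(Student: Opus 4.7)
The plan is to combine Theorem \ref{thm3} in the case $k = 2$ with the identification, indicated in Section \ref{BI}, of the Theorem \ref{thm3} solutions as full Born--Infeld minimizers. The strategy is by contradiction: one produces a cohomology class whose (singular) GMS minimizer is also a Born--Infeld minimizer, and then shows that any smooth Born--Infeld minimizer in that class would itself be forced to be a smooth GMS minimizer, contradicting Theorem \ref{thm3}.

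First, I would apply Theorem \ref{thm3} with $k=2$ to obtain a metric $g$ on $S^2 \times S^2$, a threshold $\Lambda > 0$, and a cohomology class $[\alpha^*] \in H^2(S^2 \times S^2)$ for which no smooth GMS solution exists in $[t\alpha^*]$ once $|t| \geq \Lambda$. Fix such a $t_0$ and set $[\omega] := [t_0 \alpha^*]$.

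The second step is a pointwise algebraic comparison on an oriented Riemannian $4$-manifold. Diagonalising the skew endomorphism associated to $F \in \Lambda^2(M)$, with eigenvalues $\pm i \lambda_1, \pm i \lambda_2$, one has
$$\det(g - F) \;=\; (1 + \lambda_1^2)(1 + \lambda_2^2) \;=\; 1 + |F|^2 + \tfrac{1}{4}\bigl(\ast(F \wedge F)\bigr)^2,$$
so $\sqrt{\det(g-F)} \geq \sqrt{1+|F|^2}$ pointwise, with equality precisely when $F \wedge F = 0$. Integrating yields $BI(F) \geq E_{GMS}(F)$, with equality for forms satisfying $F \wedge F \equiv 0$. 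The explicit GMS solutions of Theorem \ref{thm3} are pulled back from each $S^2$-factor separately, hence wedge to zero, so equality holds for them.

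Finally, suppose for contradiction that $\beta$ is a smooth Born--Infeld minimizer in $[\omega]$, and let $\alpha_{\text{sing}}$ denote the singular GMS minimizer of $[\omega]$ furnished by Theorem \ref{thm3}. By the result established in Section \ref{BI}, $\alpha_{\text{sing}}$ also minimizes $BI$, and hence
$$E_{GMS}(\beta) \;\leq\; BI(\beta) \;\leq\; BI(\alpha_{\text{sing}}) \;=\; E_{GMS}(\alpha_{\text{sing}}),$$
so $\beta$ would itself be a smooth GMS minimizer in $[\omega]$, contradicting Theorem \ref{thm3}. The main obstacle is making the middle inequality $BI(\beta) \leq BI(\alpha_{\text{sing}})$ rigorous, since $\alpha_{\text{sing}}$ is singular and is not a naive smooth competitor for $BI$. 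This should be handled by working in the same BV-type relaxation in which Section \ref{BI} identifies $\alpha_{\text{sing}}$ as a global minimizer of the extended $BI$ functional, so that smooth minimizers are forced to compete against singular ones; alternatively, one approximates $\alpha_{\text{sing}}$ by smooth representatives of $[\omega]$ whose BI energies converge to $BI(\alpha_{\text{sing}})$, exploiting the explicit rotational symmetry of the Theorem \ref{thm3} construction.
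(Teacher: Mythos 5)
Your overall architecture is the same as the paper's: compare $E_{GMS}$ and $E_{BI}$ pointwise (your eigenvalue computation of $\det(g-F)$ is exactly the paper's Proposition, and your condition $F\wedge F\equiv 0$ is the paper's condition $|F_+|\equiv|F_-|$), observe that equality holds for the explicit solutions $f_c(\theta)\,dV_{S^2_2}$, and play the two energies off against the nonexistence of smooth GMS solutions. However, two points in your write-up are genuine gaps rather than routine details. First, you cannot take an arbitrary $t_0$ with $|t_0|\ge\Lambda$: the construction only produces a singular form of finite energy in the threshold class itself, i.e.\ $[\kappa^*\,dV_{S^2_2}]$ with $h$ chosen so that $\kappa^*<\infty$ (equivalently $t_0=\Lambda$). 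For $|t_0|>\Lambda$ there is no candidate $\alpha_{\mathrm{sing}}$ at all, so Theorem \ref{thm3} does not ``furnish'' one.

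Second, and more seriously, the assertion that $\alpha_{\mathrm{sing}}=F^{c^*}$ minimizes $E_{BI}$ (and $E_{GMS}$) over the \emph{smooth} forms of its class cannot be cited from Section \ref{BI} --- it is the crux of the theorem being proved, and your chain of inequalities needs it twice: you need $\inf_{\mathrm{smooth}}E_{BI}\le E_{BI}(F^{c^*})$ to justify $E_{BI}(\beta)\le E_{BI}(\alpha_{\mathrm{sing}})$, and you separately need the lower bound $E_{GMS}(F^{c^*})\le\inf_{\mathrm{smooth}}E_{GMS}$, since without it the conclusion $E_{GMS}(\beta)\le E_{GMS}(\alpha_{\mathrm{sing}})$ does not make $\beta$ a smooth GMS minimizer. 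The paper supplies both by a scaling argument that is simpler than the symmetric mollification you sketch. For the upper bound the recovery sequence is just $\mu_c^{-1}F^c$ with $\mu_c=c/c^*$: these are smooth, and $E_{BI}(\mu_c^{-1}F^c)\to E_{BI}(F^{c^*})$ by dominated convergence. For the lower bound, if a smooth $F$ in the threshold class had $E_{BI}(F)<E_{BI}(F^{c^*})-\delta$, then the scaled-down forms $\mu_cF$ lie in sub-threshold classes, satisfy $E_{BI}(\mu_cF)\le E_{BI}(F)$, and for $c$ close to $c^*$ would beat the minimizer $F^c$ of the corresponding class (whose minimality is exactly what the equality $E_{BI}(F^c)=E_{GMS}(F^c)$ buys), a contradiction; the identical argument is then repeated for $E_{GMS}$. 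Once these two bounds are in place your final contradiction goes through, so the missing ingredient is precisely this scaling/continuity step.
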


\textbf{Acknowledgements:} The author would like to thank his advisors Patrick Guidotti and Jeffrey Streets, as well as Richard Schoen for their insight and advice.

\section{Preliminaries}\label{prelim}
As described above, we will be considering a compact Riemannian manifold $(M^n,g)$. $[\alpha^*]$ will be a fixed cohomology class, and we will be studying critical points of $E_{GMS}(\alpha) = \int_M \sqrt{1+|\alpha|^2}dV_g$ for $\alpha \in [\alpha^*]$. We will define $[\alpha^*]_{C^{\infty}}$ and $[\alpha^*]_{W^{1,2}}$ as the $C^{\infty}$ and $W^{1,2}$ forms in $[\alpha^*]$, respectively. We can extend the $GMS$-energy to $\alpha \in [\alpha^*]_{W^{1,2}}$ by defining 
$$E_{GMS}(\alpha) = \inf_{\left\{ \{\alpha_n\} \subseteq [\alpha^*]_{C^{\infty}}\big| \alpha_n \xrightarrow{W^{1,2}}\alpha \right\}} \liminf_{n \rightarrow \infty } E_{GMS}(\alpha_n).$$
\begin{lemma}
A smooth form $\alpha \in [\alpha^*]$ is a minimizer of the minimal surface energy over $[\alpha^*]$ iff it satisfies
\begin{equation*}
d^* \left( \frac{\alpha}{\sqrt{1+|\alpha|^2}} \right) = 0.
\end{equation*}
Moreover, if such a minimizer exists, it is the unique minimizer in $[\alpha^*]_{W^{1,2}}$. 
\end{lemma}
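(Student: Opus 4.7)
The plan is standard calculus of variations plus convexity. First I would compute the first variation of $E_{GMS}$ at a smooth $\alpha \in [\alpha^*]$ in the admissible direction $d\eta$, where $\eta$ is a smooth $(k-1)$-form. Differentiating under the integral gives
\begin{equation*}
\frac{d}{dt}\bigg|_{t=0} E_{GMS}(\alpha + t\, d\eta) = \int_M \left\langle \frac{\alpha}{\sqrt{1+|\alpha|^2}},\, d\eta \right\rangle dV_g,
\end{equation*}
and integration by parts on the compact boundaryless manifold $M$ converts this into $\int_M \langle d^*(\alpha/\sqrt{1+|\alpha|^2}),\eta\rangle\, dV_g$. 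Vanishing for all smooth $\eta$ is then equivalent, by the fundamental lemma of the calculus of variations, to the stated Euler--Lagrange equation. This handles the ``minimizer $\Rightarrow$ equation'' direction, since any minimizer is in particular a critical point within its cohomology class.

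For the converse, the key observation is that the integrand $f(v) := \sqrt{1+|v|^2}$ is a strictly convex function on each fibre of $\Lambda^k T^*M$: a direct computation of the Hessian at $v$ in direction $w$ yields
\begin{equation*}
D^2 f(v)(w,w) = \frac{(1+|v|^2)|w|^2 - \langle v,w\rangle^2}{(1+|v|^2)^{3/2}} \;\geq\; \frac{|w|^2}{(1+|v|^2)^{3/2}},
\end{equation*}
which is strictly positive for $w \neq 0$ by Cauchy--Schwarz. Thus $E_{GMS}$ is a strictly convex functional on the affine space $[\alpha^*]_{C^\infty}$. Since a critical point of a convex functional on an affine space is automatically a global minimizer, any smooth solution of the Euler--Lagrange equation minimizes $E_{GMS}$ over $[\alpha^*]_{C^\infty}$, and strict convexity rules out a second smooth minimizer (its midpoint with the first would be a cohomologous form of strictly smaller energy on any set where the two differ).

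Extending uniqueness to $[\alpha^*]_{W^{1,2}}$ is where one has to be slightly careful because of the relaxed definition of the energy. My approach is as follows. Let $\alpha \in [\alpha^*]_{C^\infty}$ satisfy the Euler--Lagrange equation and let $\tilde\alpha \in [\alpha^*]_{W^{1,2}}$ be any competitor. Take any sequence $\{\tilde\alpha_n\} \subset [\alpha^*]_{C^\infty}$ with $\tilde\alpha_n \to \tilde\alpha$ in $W^{1,2}$. Each $\tilde\alpha_n$ lies in the smooth cohomology class, so by the convex case above, $E_{GMS}(\tilde\alpha_n) \geq E_{GMS}(\alpha)$. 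Passing to the liminf and infimizing over all admissible approximating sequences gives $E_{GMS}(\tilde\alpha) \geq E_{GMS}(\alpha)$. For strict inequality when $\tilde\alpha \neq \alpha$, I would use the pointwise strict-convexity bound above applied to the midpoint $\tfrac{1}{2}(\tilde\alpha_n + \alpha)$, which lies in the smooth class and which, by the quantitative Hessian estimate, has energy at most $\tfrac{1}{2}E_{GMS}(\tilde\alpha_n) + \tfrac{1}{2}E_{GMS}(\alpha) - c\|\tilde\alpha_n - \alpha\|_{L^2}^2/(1+C)^{3/2}$ on any bounded region; taking $n\to\infty$ forces $\|\tilde\alpha - \alpha\|_{L^2} = 0$.

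The main obstacle I anticipate is this last step: ensuring that strict convexity survives the relaxation procedure and that the approximating sequences interact cleanly with the cohomology constraint (i.e.\ that midpoints of approximants stay in the smooth cohomology class, which they do since $\alpha$ and $\tilde\alpha_n$ are already there). The pointwise first-variation calculation and the convexity argument are essentially routine; bookkeeping for the $W^{1,2}$ uniqueness is the one place where care is required.
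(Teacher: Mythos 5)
Your overall strategy is the same as the paper's: first variation to derive the Euler--Lagrange equation, the pointwise Hessian bound $D^2 f(v)(w,w)\geq |w|^2/(1+|v|^2)^{3/2}$ to get strict convexity (hence the converse implication and uniqueness among smooth forms), and a quantitative convexity argument to extend uniqueness to $[\alpha^*]_{W^{1,2}}$ through the relaxed energy. The first two stages are correct and essentially identical to the paper's computation.

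The gap is in the final quantitative step. Your claimed inequality
$E_{GMS}\bigl(\tfrac{1}{2}(\tilde\alpha_n+\alpha)\bigr)\leq \tfrac{1}{2}E_{GMS}(\tilde\alpha_n)+\tfrac{1}{2}E_{GMS}(\alpha)-c\,\|\tilde\alpha_n-\alpha\|_{L^2}^2/(1+C)^{3/2}$
requires evaluating the Hessian along the segment joining $\alpha$ to $\tilde\alpha_n$, where the lower bound degenerates like $(1+|\tilde\alpha_n|^2)^{-3/2}$. The approximating sequence $\tilde\alpha_n$ carries only a $W^{1,2}$ bound, not a uniform $L^\infty$ bound, so there is no uniform constant $C$; indeed the modulus of convexity of $v\mapsto\sqrt{1+|v|^2}$ genuinely degenerates at infinity (for $|u|$ large and $v=0$ the midpoint defect is $O(1)$, not $O(|u|^2)$), so the inequality is false as stated. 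The qualifier ``on any bounded region'' does not repair this, since the region where $|\tilde\alpha_n|\leq C$ depends on $n$ and you must separately control its complement. The fix is to split: where $|\tilde\alpha_n|\leq 2\sup|\alpha|+2$ the defect dominates $c|\tilde\alpha_n-\alpha|^2$, and where $|\tilde\alpha_n|$ is large the defect is bounded below by a positive constant; since the total defect tends to $0$, you get $\tilde\alpha_n\to\alpha$ in measure, which together with $\tilde\alpha_n\to\tilde\alpha$ in $L^2$ identifies $\tilde\alpha=\alpha$. (For comparison, the paper works with the second derivative of $t\mapsto E_{GMS}(\alpha+t\gamma_n)$ and invokes the bound with denominator $(1+|\alpha|^2)^{3/2}$, i.e.\ at $t=0$ where only the fixed smooth minimizer enters; that sidesteps the degenerate constant, at the cost of needing to justify that the second derivative at $t=0$ tends to zero.) Your instinct that this is the delicate point is right, but the step as written would fail and needs the splitting argument or an equivalent.
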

\begin{proof}
We begin by calculating the first variation of energy to obtain the Euler Lagrange equation. We take variations of the form $\alpha(t) = \alpha_0 + td\psi$, which fixes the cohomology class. We calculate
\begin{align*}
\frac{d}{dt} \bigg|_{t=0} \int_M \sqrt{1+|\alpha(t)|^2} dV_g & = 
\int_M \frac{\frac{1}{2} \frac{d}{dt} |\alpha(t)|^2}{\sqrt{1+|\alpha_0|^2}} dV_g \\
& = \left< \frac{\alpha_0}{\sqrt{1+|\alpha_0|^2}} , d\psi \right>.
\end{align*}
Integrating by parts, we obtain the GMS equation
\begin{equation*}
d^* \left( \frac{\alpha_0}{\sqrt{1+|\alpha_0|^2}} \right) = 0.
\end{equation*}
Thus, every minimizer will satisfy the given Euler Lagrange equation. To show the converse, we calculate the second variation and show that the energy is strictly convex. We again consider variations of the form $\alpha(t) = \alpha_0 + td\psi$. 
\begin{align*}
\frac{d^2}{dt^2} \bigg|_{t=t_0} \int_M \sqrt{1+|\alpha(t)|^2} dV_g 
& =  \int_M \left( \frac{\frac{1}{2}\frac{d^2}{dt^2}|\alpha(t)|^2}{\sqrt{1+|\alpha(t_0)|^2}} - \frac{\big|\frac{1}{2}\frac{d}{dt} |\alpha(t)|^2\big|^2}{(1+|\alpha(t_0)|^2)^{3/2}}\right) dV_g \\
& =  \int_M \left( \frac{|d\psi|^2}{\sqrt{1+|\alpha(t_0)|^2}} - \frac{\big| (\alpha(t_0),d\psi)\big|^2}{(1+|\alpha(t_0)|^2)^{3/2}}\right) dV_g \\
& \geq \int_M \frac{|d\psi|^2}{(1+|\alpha(t_0)|^2)^{3/2}} dV_g.
\end{align*}
The last inequality follows from the Cauchy Schwartz inequality. As the second variation is strictly positive, we obtain that the functional is strictly convex and any critical point must be its unique minimum over $[\alpha^*]_{C^{\infty}}$.

We now assume that $\alpha$ is a smooth minimizer and turn to proving taht $\alpha$ is a unique minimizer over $[\alpha^*]_{W^{1,2}}$. By definition of the $GMS$-energy on forms in $W^{1,2}$ we see that 
$$\inf_{\beta \in [\alpha^*]_{W^{1,2}}} E_{GMS}(\beta) = E_{GMS}(\alpha).$$
We now assume that $\beta \in [\alpha^*]_{W^{1,2}}$ is a minimizer, i.e. $E_{GMS}(\beta) = E_{GMS}(\alpha)$. By the definition of the $GMS$-energy, we can take a sequence of smooth form $\beta_n$ such that 
$$\beta_n \xrightarrow{W^{1,2}} \beta \quad \text{ and } \quad E_{GMS}(\beta_n) \rightarrow E_{GMS}(\beta).$$
Setting $\gamma_n = \beta_n - \alpha$, we note that $\forall t \in [0,1]$, 
$$ \alpha + t \gamma_n \xrightarrow{W^{1,2}} \alpha + t \gamma \quad \text{ and } \quad E_{GMS} (\alpha + t\gamma_n) \rightarrow E_{GMS} (\alpha + t \gamma). $$
By the convexity of the $GMS$-energy for smooth forms, we know that 
$$\forall t \in [0,1],\quad \forall n\in\mathbb{N}, \quad E_{GMS}(\alpha) \leq E_{GMS}(\alpha + t\gamma_n) \leq E_{GMS}(\alpha + \gamma_n).$$
Taking the limit as $n \rightarrow \infty$, we find that 
$$\forall t \in [0,1], \quad E_{GMS}(\alpha + t\gamma) \equiv E_{GMS}(\alpha).$$
In particular, given that $E_{GMS}(\alpha + t\gamma_n)$ are monotone increasing functions of $t$, we find that
$$\lim_{n \rightarrow \infty} \frac{d^2}{dt^2} E_{GMS}(\alpha+t\gamma_n) = 0.$$
On the other hand, we know from the calculation of the second variation that 
\begin{align*}
\frac{d^2}{dt^2} E_{GMS}(\alpha + t\gamma_n) &
\geq \int_M \frac{|\gamma_n|^2}{(1+|\alpha|^2)^{3/2}} dV_g \\
& \geq c||\gamma_n||_{L^2}.
\end{align*}
Putting these two equation together, we find that 
$$\lim_{n \rightarrow \infty} \gamma_n = 0$$
and therefore $\alpha = \beta$.
\end{proof}

\subsection{Existence for small cohomology class}\label{sibnerexistence}
We next describe the results of Sibner \cite{sibnerExistence1} which prove existence of GMS solutions for small cohomology class. We begin by introducing some terminology from \cite{sibnerExistence1}. A nonlinear Hodge problem is one of the form
$$ d^* \left( \rho(|\alpha|)\alpha \right) = 0.$$
Here, $\rho: \mathbb{R}^+ \rightarrow \mathbb{R}$ is a smooth function.
\begin{definition}
The function $\rho$ is called \emph{admissible} if there exist constants $c$ and $k(c)$ such that for $0 < x < c$ we have 
\begin{enumerate}
\item $\frac{1}{k(c)} < \rho(x) < k(c)$
\item $\frac{1}{k(c)} < \rho(x) + \rho'(x)x < k(c)$
\end{enumerate} 
The supremum $Q_{\rho}$ of such values $c$ such that there exists such a $k(c)$ is called the \emph{sonic value} of $\rho$. If the $Q_{\rho} = \infty$ and $k$ can be chosen independent of $c$ then $\rho$ is called \emph{regular}. 
\end{definition}
\begin{remark}
We note that by setting $\rho_{GMS}(|\alpha|) = \frac{1}{\sqrt{1+|\alpha|^2}}$, the minimal surface equation is of the form 
$$ d^* \left( \rho_{GMS}(|\alpha|)\alpha \right) = 0.$$
Furthermore, $\rho_{GMS}$ is admissible with $Q_{\rho_{GMS}} = \infty$. However, $\rho$ is not regular.
\end{remark}
These conditions serve as ellipticity conditions on the PDE, and allowed the Sibners to show existence for small cohomology class. 
\begin{theorem}\label{sibnerthm}
(\cite{sibnerExistence2} Thm. 1)Given a Riemannian manifold $(M,g)$, a cohomology class $[\alpha^*]$, and $\rho$ an admissible function with sonic value $Q_{\rho}$, then $\exists T>0$ such that
\begin{enumerate}
\item For all $t \in [0,T)$, $\exists \alpha_t \in [t\alpha_0]$ such that $d^* \left( \rho(|\alpha_t|) \alpha_t \right) = 0$. 
\item $\lim_{t \rightarrow T} \sup_{x\in M}{|\alpha_t|} = Q_{\rho}$. 
\item $\alpha_t$ depends continuously on $t$ in the topology of uniform convergence.
\end{enumerate}
Furthermore, if $\rho$ is regular, then a solution exists for every cohomology class.
\end{theorem}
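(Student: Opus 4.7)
The plan is to use the classical continuity method. Define $S \subseteq [0,\infty)$ to be the set of $t$ for which there exists a smooth solution $\alpha_t \in [t\alpha^*]$ of $d^*(\rho(|\alpha_t|)\alpha_t) = 0$ satisfying $\sup_M |\alpha_t| < Q_\rho$, and let $T = \sup S$. Note $0 \in S$ since $\alpha_0 \equiv 0$ works. The goal is to show $S = [0,T)$ by establishing openness and a controlled form of closedness, then to identify what obstructs extension past $T$.

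For openness, I would use the Hodge decomposition to parametrize the cohomology class: every $\alpha \in [t\alpha^*]$ can be written uniquely as $\alpha = t\alpha_H + d\psi$ where $\alpha_H$ is the harmonic representative and $\psi \in \Lambda^{k-1}(M)$ satisfies the Coulomb gauge $d^*\psi = 0$. In these variables, the Euler--Lagrange equation becomes a quasilinear second-order elliptic system for $\psi$. Linearizing around a solution $\alpha_{t_0}$ with $\sup|\alpha_{t_0}| < Q_\rho$, one gets an operator whose principal symbol is controlled above and below by the two quantities $\rho(|\alpha_{t_0}|)$ and $\rho(|\alpha_{t_0}|) + \rho'(|\alpha_{t_0}|)|\alpha_{t_0}|$ appearing in the definition of admissibility; condition (1) bounds the symbol on directions perpendicular to $\alpha_{t_0}$, condition (2) bounds it along $\alpha_{t_0}$. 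Since $\sup|\alpha_{t_0}| < Q_\rho$, the linearized operator is uniformly elliptic, self-adjoint, and invertible on the space of coexact $(k-1)$-forms (by the convexity/positivity computation already used in the first lemma of the paper). The implicit function theorem in Hölder spaces $C^{2,\gamma}$ then yields solutions $\alpha_t$ for $t$ near $t_0$, establishing openness and simultaneously the continuous dependence claim (3).

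For the behavior at $T$, I would establish an a priori $C^{2,\gamma}$ estimate for any solution in terms of $\sup|\alpha_t|$ alone. The admissibility conditions give bounds on the ellipticity constants of the linearized equation as functions of $\sup|\alpha_t|$, so standard Schauder theory applied to the system in Coulomb gauge produces uniform estimates on any subinterval where $\sup|\alpha_t|$ stays strictly below $Q_\rho$. Arzelà--Ascoli then lets one pass to a limit solution, so the only way $T$ can fail to be infinite is if $\sup|\alpha_t| \to Q_\rho$; this gives (2). In the regular case, admissibility holds globally with a single constant $k$, so the a priori estimate is uniform in $t$ and no finite $T$ can occur, giving the final assertion.

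The main obstacle is the a priori $C^{2,\gamma}$ estimate for $k > 1$. Unlike the scalar case $k = 1$, where $\psi$ is a function and one can invoke De~Giorgi--Nash--Moser and classical quasilinear theory directly, for $k > 1$ the Coulomb-gauged equation is a genuine elliptic system and the bootstrap from an $L^\infty$ bound on $\alpha_t$ to full Hölder regularity is not automatic. One would need either to exploit the special divergence-type and variational structure of the equation to obtain a Caccioppoli inequality followed by Morrey/Campanato-type iteration, or to work in the Uhlenbeck tradition using the gauge condition to decouple the system sufficiently for Schauder estimates to apply. This is the technical heart of Sibner's argument and the step that most plausibly restricts the method to admissible, as opposed to arbitrary, nonlinear Hodge problems.
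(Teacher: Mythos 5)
The first thing to note is that the paper does not prove this statement at all: it is quoted verbatim as Theorem~1 of Sibner--Sibner (\cite{sibnerExistence2}) and used as a black box, so there is no ``paper's own proof'' to compare against. Judged on its own terms, your outline is the right strategy and essentially the one used in the literature for nonlinear Hodge systems (a continuity method in the cohomology parameter, with the two admissibility inequalities playing exactly the role you assign them: they bound the principal symbol of the linearization in the directions along and perpendicular to $\alpha$, so that $\sup|\alpha_t|<Q_\rho$ is precisely the condition for uniform ellipticity). The openness step via Hodge decomposition, Coulomb gauge, and the implicit function theorem is sound, as is the observation that regularity of $\rho$ makes the ellipticity constants independent of $\sup|\alpha_t|$ and hence removes the obstruction to $T=\infty$.

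The problem is that, as you yourself acknowledge in the final paragraph, the proposal defers the a priori $C^{2,\gamma}$ estimate from the $L^\infty$ bound --- and for $k>1$ that estimate is not a technical afterthought but the entire content of the theorem. Without it the closedness half of the continuity method does not close, and neither conclusion (2) nor the regular case follows. A proof would need to actually carry out the Caccioppoli/Morrey iteration or the Uhlenbeck-style gauge argument you gesture at, and also to upgrade your argument for (2) from ``$\sup|\alpha_t|$ cannot stay bounded away from $Q_\rho$ as $t\to T$'' (which only gives $\limsup_{t\to T}\sup|\alpha_t|=Q_\rho$) to the stated limit; the latter typically uses a monotonicity or comparison argument in $t$ that your sketch does not supply. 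So this is a correct plan with the hardest step named but not executed, rather than a proof.
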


With respect to the GMS equation, we find that for any cohomology class $[\alpha^*]$ there exists $T>0$ such that
\begin{enumerate}
\item For $t < T$, there is a unique solution of the minimal surface equation in the class $[t\alpha^*]$
\item $\lim_{t \rightarrow T} \sup_{x\in M}{|\alpha_t|} = \infty$. 
\end{enumerate}
However, as $\rho_{GMS}$ is not regular, it remains unclear whether $T = \infty$ or whether singularities can form in finite cohomology class. In the next section, we will show that for $[\alpha^*] \in H^1(M)$ the answer is that a solution exists in every cohomology class. In contrast, in section \ref{examples}, we construct counterexamples where singularities occur for finite cohomology class when $k>1$.

\section{Proof of Theorem 1}\label{mainproof}

To prove the result, we will require the following two theorems from the theory of minimal surfaces. For the rest of the section we will write $B_{\rho}(p)$ for the ball of radius $\rho$ in the manifold $M$, while $\bar{B}_{\rho}(p)$ will denote the ball of radius $\rho$ in the manifold $M \times \mathbb{R}$. 

\begin{theorem}\label{lemma1}\label{volumethm}
Given a compact manifold $M^n$, $\exists \epsilon$ and $\rho_0$ such that $\forall \rho < \rho_0$, if $\Sigma \subset M \times \mathbb{R}$ is a minimal surface, then $\forall x_0 \in \Sigma$ we have $|\bar{B}_{\rho}(x_0) \cap \Sigma| > \epsilon \rho^n$.
\end{theorem}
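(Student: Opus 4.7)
The plan is to deduce this volume lower bound from the classical monotonicity formula for minimal submanifolds in a Riemannian ambient space. Since $M$ is compact, $M \times \mathbb{R}$ has bounded sectional curvature and a uniformly positive injectivity radius on any bounded neighborhood. In particular, there exists $\rho_0 > 0$, depending only on $(M,g)$, such that for every $x_0 \in M \times \mathbb{R}$ the ball $\bar{B}_{\rho_0}(x_0)$ lies inside the injectivity radius and the sectional curvatures there are bounded in absolute value by some constant $K = K(M,g)$.

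On such a ball, the standard weighted monotonicity formula (as in Simon's lectures on geometric measure theory or Colding--Minicozzi) states that for a smooth minimal $n$-submanifold $\Sigma$ with $x_0 \in \Sigma$ and no boundary in $\bar{B}_{\rho_0}(x_0)$, and for $\rho \in (0,\rho_0)$, the weighted density
$$\Phi(\rho) \;:=\; e^{\Lambda \rho^2}\, \frac{|\bar{B}_\rho(x_0) \cap \Sigma|}{\rho^n}$$
is non-decreasing in $\rho$, where $\Lambda = \Lambda(K,n)$ depends only on the curvature bound and the dimension. The proof is the usual one: differentiating $\rho^{-n}|\bar B_\rho \cap \Sigma|$ and using the first variation formula for $\Sigma$ together with the divergence $\mathrm{div}_\Sigma(r\,\nabla r)$ in normal coordinates at $x_0$; the extra exponential factor absorbs the lower-order ambient curvature error terms, which are uniformly controlled by $K$.

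Taking $\rho \to 0^+$ and using that $\Sigma$ is a smooth $n$-dimensional submanifold near $x_0$, comparison with the Euclidean tangent disk gives
$$\lim_{\rho \to 0^+}\Phi(\rho) \;=\; \omega_n,$$
the Euclidean volume of the unit $n$-ball. Combining monotonicity with this limit yields, for every $\rho < \rho_0$,
$$|\bar{B}_\rho(x_0) \cap \Sigma| \;\geq\; e^{-\Lambda \rho^2}\omega_n \rho^n \;\geq\; e^{-\Lambda \rho_0^2}\omega_n\,\rho^n,$$
so the statement holds with $\epsilon := e^{-\Lambda \rho_0^2}\omega_n$.

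The main technical step is establishing the weighted monotonicity in the Riemannian ambient $M \times \mathbb{R}$ rather than in Euclidean space; this reduces to bounding the curvature correction terms in the first variation of the density ratio, which is handled uniformly by compactness of $M$. A subtle point to keep in mind is that one needs $\Sigma$ to have no boundary inside $\bar{B}_{\rho_0}(x_0)$, since otherwise the density bound can degenerate near the boundary; in the intended application $\Sigma$ will be a (possibly multi-valued) graph over $M$ and is hence boundaryless, so this causes no difficulty.
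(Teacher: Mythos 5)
Your argument is correct, and it rests on the same core mechanism as the paper's proof: a monotonicity formula for the area ratio combined with the fact that the density of a smooth $n$-dimensional minimal submanifold at an interior point tends to $\omega_n$. The two proofs differ in how the monotonicity formula is obtained. You work intrinsically in $(M\times\mathbb{R},\,g+dt^2)$, invoking a weighted Riemannian monotonicity formula whose exponential correction absorbs the ambient curvature error terms; this requires uniform sectional curvature bounds and a uniform lower bound on the injectivity radius, both of which hold by compactness of $M$ and the product structure. The paper instead isometrically embeds $M\times\mathbb{R}$ into some $\mathbb{R}^l$ via Nash, observes that a minimal submanifold of $M\times\mathbb{R}$ then has uniformly bounded mean curvature as a submanifold of $\mathbb{R}^l$ (bounded by the second fundamental form of the embedding), and applies the off-the-shelf Euclidean monotonicity formula for bounded mean curvature (Simon 17.6). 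Your intrinsic route has the small advantage that the balls in the monotonicity formula are exactly the balls $\bar{B}_\rho$ of the statement, whereas the extrinsic route a priori bounds the volume of $\Sigma$ in Euclidean balls of $\mathbb{R}^l$, which contain the intrinsic balls, so a further (easy, uniform) comparison of extrinsic and intrinsic distances is needed to transfer the bound; the extrinsic route buys a cleaner citation. Your closing caveat that $\Sigma$ must have no boundary inside $\bar{B}_{\rho_0}(x_0)$ is a hypothesis implicit in the paper's statement as well, and it is satisfied in the intended application since the balls used there are centered over $B_\rho(p)$ and hence stay away from the graph's boundary over $\partial B_{2\rho}(p)$.
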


\begin{proof}
We remark that the statement is a simple corollary of the monotonicity formula for minimal surfaces. To begin, using the Nash embedding theorem, we embed our manifold $M \times \mathbb{R}$ into Euclidean space $\mathbb{R}^l$. As $\Sigma$ is a minimal surface, we find that there exists a constant $\Lambda$ such that $|H_{\Sigma}| < \Lambda$. Here, $H_{\Sigma}$ is the second fundamental form of $\Sigma$ as an embedded surface in $\mathbb{R}^l$. We then apply the monotonicity formula to $\Sigma \subset \mathbb{R}^l$, found in Simon, 17.6, \cite{simon}, which states that if $0 < \sigma < \rho < R$, then
$$e^{\Lambda \sigma} \sigma^{-n} |\Sigma \cap \bar{B}_{\sigma}(p)| \leq e^{\Lambda  \rho} \rho^{-n} |\Sigma \cap \bar{B}_{\rho}(p)|$$
We note that as $\sigma \rightarrow 0$, the left hand side converges to $\omega_n$, the volume of the $n$-dimensional unit ball. Furthermore, given that $\rho < R$, $e^{\Lambda \rho } \leq e^{\Lambda R}$. Therefore, we find that
$$  \omega_n e^{-\Lambda R} \rho^n \leq |\Sigma \cap \bar{B}_{\rho}(p)|.$$
\end{proof}
We next quote a theorem from Spruck \cite{spruck} which gives $C^0$ gradient estimates on minimal surfaces in $M \times \mathbb{R}$. These estimates were originally proved for Euclidean space by Bombieri in \cite{bombieri} for low dimension and Simon in \cite{simonEstimates} for high dimension. In the following theorem, Spruck generalizes their results to the case of $M \times \mathbb{R}$. 
\begin{theorem}\label{spruckthm}
(\cite{spruck} Thm 1.1) If $(x,u(x))$ is a minimal graph in $B_{\rho}(p) \times \mathbb{R}$ and $u \geq 0$, then 
$$\sqrt{1+|du(p)|^2} \leq 32 \max \left( 1,\left( \frac{u(p)}{\rho} \right)^2 \right) \exp^{16 C u(p)} \exp^{16 C \left( \frac{u(p)}{\rho} \right)^2} $$
where $C$ is a constant which depends only on the sectional curvatures of $M$ and an upper bound for $\Delta d_p^2$ on $B_{\rho}$, where $d_p$ is the distance function from $p$.
\end{theorem}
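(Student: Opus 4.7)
The plan is to adapt the Bombieri--De Giorgi--Miranda $C^0$ gradient estimate to the Riemannian warped setting of $M\times\mathbb{R}$. Let $\Sigma = \{(x,u(x)) : x\in B_\rho(p)\}$ denote the minimal graph and write $N$ for its upward unit normal. The fundamental quantity is the angle function $\phi = \langle N,\partial_t\rangle$, which equals $1/\sqrt{1+|du|^2}$; the goal is to prove an upper bound on $v := 1/\phi$ at the point $p$. First I would compute $\Delta_\Sigma \phi$, using the Jacobi-type identity that holds because $\partial_t$ is a Killing field on $M\times\mathbb{R}$; minimality of $\Sigma$ then yields
\begin{equation*}
\Delta_\Sigma \phi + \bigl(|A|^2 + \mathrm{Ric}_{M\times\mathbb{R}}(N,N)\bigr)\phi = 0.
\end{equation*}
From this one gets a differential inequality of the form $\Delta_\Sigma v \geq |A|^2 v - C_0 v$, where $C_0$ depends only on the sectional curvatures of $M$.

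Next I would use the height function $u$, restricted to $\Sigma$, as a barrier. Because $\partial_t$ is Killing, $u$ is harmonic on $\Sigma$, so any function of $u$ enters cleanly into the computations; in particular, for a suitable constant $C$ one checks that $w := e^{Cu}v$ satisfies $\Delta_\Sigma w \geq |A|^2 w$ on $\Sigma$, and in particular $\Delta_\Sigma w \geq 0$. This is the weak subharmonicity needed for mean-value / Moser iteration. The ambient curvature of $M$ has been absorbed into the exponential integrating factor $e^{Cu}$, which ultimately is the source of the factor $\exp(16Cu(p))$ in the conclusion.

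Third, I would run Moser iteration for $w$ on $\Sigma$, using the Michael--Simon Sobolev inequality on minimal submanifolds. This produces a pointwise bound
\begin{equation*}
w(p) \;\leq\; \frac{C_1}{|\Sigma\cap \bar B_\rho(p)|} \int_{\Sigma\cap \bar B_\rho(p)} w\, d\mathrm{vol}_\Sigma,
\end{equation*}
after using Theorem \ref{volumethm} to bound the denominator below by $\epsilon\rho^n$. The $L^1$ norm of $w$ over the graph is, up to the exponential factor $e^{Cu}$, the area of $\Sigma\cap \bar B_\rho(p)$; a comparison argument with a translated catenoid-type barrier bounds this area in terms of $\max(1,(u(p)/\rho)^2)$ and an additional exponential $\exp(16C(u(p)/\rho)^2)$ coming from iterating the Caccioppoli inequality at scale $\rho$. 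Combining these contributions reproduces the stated estimate.

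The main obstacle is the Moser iteration in the curved setting: in Euclidean space the Sobolev inequality on a minimal surface is sharp and dimension-free, but on $\Sigma \subset M \times \mathbb{R}$ one must either appeal to the Michael--Simon inequality (which requires care when the ambient has variable curvature, since the relevant mean curvature in $\mathbb{R}^l$ after Nash embedding is only bounded, not zero) or work intrinsically and accept curvature error terms. Keeping track of how each curvature correction forces an integrating factor, and showing that all of them can be absorbed into the two exponentials $\exp(16Cu(p))$ and $\exp(16C(u(p)/\rho)^2)$ with constants depending only on sectional curvatures of $M$ and on an upper bound for $\Delta d_p^2$, is the delicate bookkeeping that constitutes the real work of the argument.
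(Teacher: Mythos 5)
The first thing to note is that the paper offers no proof of this statement: it is quoted verbatim as Theorem 1.1 of Spruck's paper, so there is no internal argument to compare yours against. For the record, Spruck's own proof is not the integral/Moser-iteration route you sketch but Korevaar's maximum-principle technique: one applies the maximum principle to $\eta W$, where $W=\sqrt{1+|du|^2}$ and $\eta$ is an explicit cutoff built from the height function and the squared distance (which is exactly where the hypothesis on an upper bound for $\Delta d_p^2$ enters), and the explicit constants $32$ and $16C$ in the statement fall out of that computation. Your plan is the older Bombieri--De Giorgi--Miranda strategy transplanted to $M\times\mathbb{R}$; it is a legitimate alternative in principle, but as written it has two genuine gaps.

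First, the assertion that $w=e^{Cu}v$ with $v=1/\phi$ satisfies $\Delta_\Sigma w\geq |A|^2 w$ is precisely the step that cannot be waved at. Expanding, $\Delta_\Sigma w = e^{Cu}\left(\Delta_\Sigma v + 2C\langle\nabla u,\nabla v\rangle + C^2|\nabla_\Sigma u|^2 v\right)$ (the $\Delta_\Sigma u$ term vanishes by harmonicity of the height), and while $\Delta_\Sigma v \geq (|A|^2+\mathrm{Ric}(N,N))v$, the cross term $2C\langle\nabla u,\nabla v\rangle$ has no sign and $|\nabla_\Sigma u|^2 = 1-\phi^2$ degenerates where $du$ is small, so the quadratic term cannot absorb both the cross term and the Ricci term uniformly. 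This is why the classical argument works with $\log v$, for which $\Delta_\Sigma\log v = |A|^2+\mathrm{Ric}(N,N)+|\nabla\log v|^2$ and the gradient term helps, rather than with $v$ itself. Second, and more seriously, your final step conflates $\int_\Sigma v\,d\mathrm{vol}_\Sigma$ with the area of $\Sigma$: since $d\mathrm{vol}_\Sigma = W\,dx$ and $v=W$, that integral is $\int W^2\,dx$, which is not controlled by the area lower bound of Theorem \ref{volumethm} nor by any barrier comparison of the kind you invoke. The BDM proof applies the mean-value inequality to the subharmonic function $\log W$ precisely because $\int_\Sigma \log W\,d\mathrm{vol}_\Sigma$ can be bounded in terms of the area and the oscillation of $u$, and exponentiating at the end is what produces the exponential factors in the stated estimate; applied to $W$ itself the iteration does not close. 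Until these two steps are repaired the sketch does not yield the stated bound, and given that the result is being cited rather than proved, the cleaner course is simply to defer to Spruck's Theorem 1.1.
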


\begin{proof}[Proof of Theorem \ref{thm1}]
We recall that by Theorem \ref{sibnerthm}, it suffices to show 
$$\forall T < \infty, \quad \exists C \text{ s.t. for } t<T, \quad \sup_{x \in M} |\alpha_t| < C,$$
where $\alpha_t \in [t\alpha^*]$ is the GMS solution.

Let $p \in M$. We take $\rho < \rho_0$ (from Theorem \ref{volumethm}) such that $2\rho$ is less than the injectivity radius of $M$. Then $B_{2\rho}(p) \subset M$ is simply connected. As $\alpha_t$ is a closed $1$-form, there exists a function $u_t:B_{2\rho}(p) \rightarrow \mathbb{R}$ such that $\alpha_t \big|_{B_{2\rho}(p)} = du_t$. Furthermore, the GMS equation for $\alpha_t$ states that 
$$d^* \left( \frac{du_t}{\sqrt{1+|du_t|^2}} \right) = 0.$$
Thus $u_t$ defines a minimal graph over $B_{2\rho}(p)$. We define $\Sigma := \{(x,u(x) | x \in B_{2\rho}(p) \}$. We set 
$$\kappa := E_{GMS}(T\alpha^*) = \int_M \sqrt{1+|T\alpha^*|^2}dV_g.$$
Because $\alpha_t$ minimizes the GMS energy in its cohomology class,  
\begin{align*}
\int_{B_{2\rho}(p)} \sqrt{1+|du_t|^2} dV_g 
& \leq \int_M \sqrt{1+|\alpha_t|^2} dV_g \\
& \leq \int_M \sqrt{1+|t\alpha^*|^2} dV_g < \kappa.
\end{align*}
We now show that this implies that 
$$\sup_{x \in B_{\rho}(p)} u_t(x) - \inf_{x \in B_{\rho}(p)} u_t(x) \leq \rho \left(1+\frac{\kappa}{\epsilon \rho^n} \right) .$$
Here, $\epsilon$ is taken from Theorem \ref{volumethm}. We argue by contradiction. Assume that there exist $x,y \in B_{\rho}(p)$ such that $u(x) - u(y) > \rho(1+\frac{\kappa}{\epsilon \rho^n})$. As $u$ is a smooth function, we know that there exists a curve connecting $(x, u(x))$ and $(y, u(y))$ lying in $\Sigma \cap (B_{\rho}(p) \times \mathbb{R})$. We take $m := \lfloor \frac{u(x) - u(y)}{\rho} \rfloor$ points along the curve, given by $\{ (x_1,u(x_1), \ldots, (x_n,u(x_n)) \}$ such that $u(x_k) - u(x_{k-1}) = \rho$ for $1 < k \leq m$. We note that by assumption, $m > \frac{\kappa}{\epsilon \rho^n}$. Then, we take the $m$ balls $\bar{B}_k := \bar{B}_{\rho}((x_k,u(x_k))$; see figure \ref{fig:mingraph}.
\begin{figure}[ht]
\centering
\caption{}
\begin{tikzpicture}
\draw (-2,-2) -- (2,-2) node[anchor=north]{$B_{\rho} \subset M$};
\draw (-2,-2) -- (-2,2) node[anchor=east]{$\mathbb{R}$};
\draw (-2,-2) .. controls (-1,1) and (1,-1) .. (2,2)
	node[pos = .1]{$\bullet$}
	node[pos = .5]{$\bullet$}
	node[pos = .9]{$\bullet$}
	node[pos=.1,circle,inner sep=3pt,draw,label=0:$\bar{B}_{k-1}$]{\phantom{text}}
	node[pos=.5,circle,inner sep=3pt,draw,label=0:$\bar{B}_k$]{\phantom{text}}
	node[pos=.9,circle,inner sep=3pt,draw,label=0:$\bar{B}_{k+1}$]{\phantom{text}}
	node[anchor=west]{$\Sigma$}; 
\end{tikzpicture}
\label{fig:mingraph}
\end{figure}
We note that, by construction, the $\bar{B}_k$ are mutually disjoint and each $\bar{B}_k$ lies entirely in $B_{2\rho}(p) \times \mathbb{R}$. Furthermore, by Theorem \ref{volumethm}, we know that $|\Sigma \cap \bar{B}_k| \geq \epsilon \rho^n$. Thus, using the definition of $m$,
$$| \Sigma \cap (B_{2\rho}\times \mathbb{R}) | \geq \cup_{k=1}^m |\Sigma \cap \bar{B}_k| \geq m\epsilon \rho^n > \kappa,$$
a contradiction.

Now, by shifting $u_t$ by a constant, we obtain a new function $\tilde{u}_t$ on $B_{\rho}$ such that $\tilde{u}_t \geq 0$ and $\sup \tilde{u}_t < \rho \left( 1 + \frac{\kappa}{\epsilon \rho^n} \right)$. Then, applying Theorem \ref{spruckthm}, we find that there is a constant, $C$ independent of $p$ such that 
$$ \sqrt{1+|d\tilde{u}_t(p)|^2} \leq C.$$
\end{proof}

%\subsection{The BI Gradient Flow}
%
%The proof of the convergence of the gradient flow follows very similarly to the proof of the existence of the elliptic problem. 

\section{Limits of GMS Solutions}\label{limits}

We now prove Theorem \ref{thm2}. Given a cohomology class $\alpha^*$, we have, by Theorem \ref{thm1}, a 1-parameter family of solutions to the GMS equation $\alpha_t \in [t\alpha^*]$. We rescale the GMS solutions by defining 
$$ \beta_t = t^{-1} \alpha_t. $$
We note that $\beta_t \in [\alpha^*]$ for all $t$. Furthermore, 
$$ GMS(\alpha_t) = \int_M \sqrt{1+\alpha_t^2}dV_g = \frac{1}{t} \int_M \sqrt{t^{-2} + \beta_t^2} dV_g.$$
We now define the rescaled $t-GMS$ energy to be
$$ E^t_{GMS}(\alpha) = \int_M \sqrt{t^{-2} + |\alpha|^2} dV_g. $$
Thus, $\beta_t \in [\alpha^*]$ minimizes the $t-GMS$ energy iff $\alpha_t$ minimizes the GMS energy in $[t\alpha^*]$. We can study the limiting behavior as $t \rightarrow 0$ and $t \rightarrow \infty$. We begin by showing the first part of Theorem \ref{thm2}.
\begin{proposition}
As $t \rightarrow 0$, $\beta_t \xrightarrow{C^{\infty}} \alpha_H$, where $\alpha_H$ is the harmonic representative of $[\alpha^*]$. 
\end{proposition}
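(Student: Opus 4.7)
The plan is to exploit the fact that, up to an additive constant and a rescaling, $E^t_{GMS}$ degenerates to the $L^2$ energy as $t\to 0$. Indeed,
$$t\,E^t_{GMS}(\alpha) = \int_M\sqrt{1+t^2|\alpha|^2}\,dV_g = |M|+\tfrac{t^2}{2}\|\alpha\|_{L^2}^2+O(t^4),$$
so the minimizers $\beta_t$ should converge to the unique $L^2$-minimizer of $[\alpha^*]$, namely $\alpha_H$. Once a soft weak convergence has been secured, the Euler--Lagrange equation should then, via elliptic bootstrap, upgrade this to $C^\infty$ convergence.

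For the soft step, I would first invoke the continuity statement of Theorem \ref{sibnerthm} to conclude $\|\alpha_t\|_{L^\infty}\to 0$ as $t\to 0$, so that for small $t$ we have a uniform bound $\sqrt{1+|\alpha_t|^2}\ge 1+c|\alpha_t|^2$ with $c>0$. Comparing with the minimality estimate $E_{GMS}(\alpha_t)\le E_{GMS}(t\alpha_H)\le |M|+\tfrac{t^2}{2}\|\alpha_H\|_{L^2}^2$ (using the concavity bound $\sqrt{1+y^2}\le 1+y^2/2$) then gives $\|\beta_t\|_{L^2}\le C\|\alpha_H\|_{L^2}$. By weak $L^2$ compactness, lower semicontinuity of the $L^2$ norm, and the fact that cohomology is preserved under weak $L^2$ limits of closed forms, any weak subsequential limit $\beta_0$ lies in $[\alpha^*]$ and satisfies $\|\beta_0\|_{L^2}^2\le\|\alpha_H\|_{L^2}^2$; uniqueness of the Hodge representative then forces $\beta_0=\alpha_H$.

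To promote $\beta_t\rightharpoonup\alpha_H$ to $C^\infty$ convergence, Hodge-decompose $\beta_t=\alpha_H+d\phi_t$ with $\phi_t$ mean-zero, so that $\Delta\phi_t=d^*\beta_t$. Expanding the Euler--Lagrange equation $d^*\bigl((1+t^2|\beta_t|^2)^{-1/2}\beta_t\bigr)=0$ via the product rule for $d^*$ yields
$$d^*\beta_t = -\frac{t^2}{1+t^2|\beta_t|^2}\,(\nabla_i\beta_t^{\,j})\beta_t^{\,i}\beta_t^{\,j} = O\!\bigl(t^2\,|\beta_t|^2\,|\nabla\beta_t|\bigr).$$
Once uniform $C^k$ bounds on $\beta_t$ are in hand --- obtained by a bootstrap combining the Spruck estimate of Theorem \ref{spruckthm} (applied locally to the potentials $u_t$ with $\alpha_t|_{B_\rho}=du_t$, as in the proof of Theorem \ref{thm1}) with Schauder estimates applied to the Hodge decomposition of $\alpha_t$ --- the right-hand side above is $O(t^2)$ in every $C^{k,\alpha}$, so Schauder gives $\|\phi_t\|_{C^{k+2,\alpha}}\to 0$ for every $k$.

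The principal obstacle is obtaining the sharp quantitative rate $\|\alpha_t\|_{L^\infty}=O(t)$, equivalently, uniform $L^\infty$ bounds on $\beta_t=t^{-1}\alpha_t$; Theorems \ref{sibnerthm} and \ref{spruckthm} alone supply only $\|\alpha_t\|_{L^\infty}=O(1)$ on compact $t$-intervals, which is too weak for the rescaled forms. This rate reflects the fact that the linearization of the GMS operator at $\alpha=0$ is the Hodge Laplacian $d^*d$, elliptic and invertible on exact forms, and closing the loop between the Spruck interior estimate and Schauder regularity applied to the equation $\Delta\psi_t=d^*\alpha_t=O(|\alpha_t|^2|\nabla\alpha_t|)$ is the technical heart of the argument.
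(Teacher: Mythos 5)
Your proposal is incomplete at exactly the point you flag yourself: you never obtain the uniform higher-order bounds on $\beta_t$ that your Schauder bootstrap requires, and you assert that the missing ingredient is the rate $\|\alpha_t\|_{L^\infty}=O(t)$. That diagnosis is not quite right, and the paper's proof shows why no such rate is needed. Rewrite your own identity: since $t^2|\beta_t|^2=|\alpha_t|^2$, the Euler--Lagrange equation gives
$$|d^*\beta_t|\;=\;\frac{t^2}{1+t^2|\beta_t|^2}\,\bigl|\beta_t^i\beta_t^j\nabla_i(\beta_t)_j\bigr|\;\leq\;|\alpha_t|^2\,|\nabla\beta_t|,$$
and $\|\alpha_t\|_{L^\infty}\to 0$ by the Sibner continuity statement alone. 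So the nonlinearity is a \emph{small} perturbation of the linear Hodge system for $\beta_t$ without any quantitative rate in $t$. The paper exploits this by writing $\beta_t=\alpha^*+df_t$ and expressing the equation as a uniformly elliptic second-order equation for $f_t$ whose coefficients are $1+|\alpha_t|^2$ and $(\alpha_t)_i(\alpha_t)_j$ and whose inhomogeneity involves only $\alpha^*$ and $\alpha_t$ --- crucially, $\beta_t$ itself appears nowhere outside the principal part. Uniform $C^{k,\gamma}$ control of $\alpha_t$ for small $t$ (from the continuity statement plus elliptic regularity for the unscaled equation) then feeds directly into Schauder to give uniform $C^{k+2,\gamma}$ bounds on $f_t$, with no circularity and no need for Spruck's gradient estimate, which the paper uses only in the existence proof of Theorem \ref{thm1}. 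Arzel\`a--Ascoli and passage to the limit in the PDE (which becomes $\Delta f_0=-d^*\alpha^*$) then identify the limit as $\alpha_H$. Equivalently, in your formulation one absorbs: $\|\nabla\beta_t\|\leq C(\|d\beta_t\|+\|d^*\beta_t\|+\|\beta_t\|)\leq C\|\alpha_t\|^2_{L^\infty}\|\nabla\beta_t\|+C\|\beta_t\|_{L^2}$, and the first term on the right is absorbed for small $t$. Either way, the step you call ``the technical heart'' is precisely the step you must supply; as written the proof does not close.

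On the positive side, your soft step is a genuinely different and rather clean way to identify the limit: expanding $t\,E^t_{GMS}(\alpha)=|M|+\tfrac{t^2}{2}\|\alpha\|_{L^2}^2+O(t^4)$, comparing with the competitor $t\alpha_H$, and using weak $L^2$ compactness plus uniqueness of the harmonic representative pins down $\beta_0=\alpha_H$ variationally. The paper instead identifies the limit only after the full compactness argument, by observing that the limiting PDE is $\Delta f_0=-d^*\alpha^*$. Your route buys an a priori $L^2$ bound $\|\beta_t\|_{L^2}\leq C$ and a conceptual explanation (the $t$-GMS energy ``$\Gamma$-converges'' to the Hodge energy as $t\to 0$, mirroring Proposition \ref{gamma} at the other endpoint), but it cannot substitute for the elliptic estimates needed to upgrade weak $L^2$ convergence to $C^\infty$ convergence.
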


\begin{proof}
A $t-GMS$ solution satisfies
$$ d^* \left( \frac{\beta_t}{\sqrt{t^{-2} + |\beta_t|^2}} \right) = 0.$$
We evaluate to find
\begin{align*}
d^* \left( \frac{\beta_t}{\sqrt{t^{-2} + |\beta_t|^2}} \right) 
& = \frac{d^* \beta_t}{\sqrt{t^{-2} + |\beta_t|^2}} - \frac{*(\frac{1}{2}\nabla|\beta_t|^2 \wedge *\beta_t)}{(t^{-2} + |\beta_t|^2)^{3/2}} = 0
\end{align*}
We multiply the equation by $t^2(t^{-2} + |\beta_t|^2)^{(3/2)}$ to obtain
\begin{align*}
d^*\beta_t + t^2 \left( |\beta_t|^2 d^* \beta_t - * \frac{1}{2} \nabla |\beta_t|^2 \wedge * \beta_t \right) = 0
\end{align*}
We rewrite the equation in local coordinates, setting $\beta_t = \alpha^* + df_t$.
\begin{align*}
\nabla^i \nabla_i f_t & + t^2|\beta_t|^2 \nabla^i \nabla_i f_t - t^2(\beta_t)_i (\beta_t)_j \nabla^i \nabla_j f_t \\
& = -\nabla^i (\alpha^*)_i - t^2|\beta_t|^2 \nabla^i (\alpha^*)_i + t^2 (\beta_t)_i (\beta_t)_j \nabla^i (\alpha^*)_j.
\end{align*}
We now recognize that $t \beta_t = \alpha_t$. Thus we obtain
\begin{align*}
\nabla^i \nabla_i f_t & + |\alpha_t|^2 \nabla^i \nabla_i f_t - (\alpha_t)_i (\alpha_t)_j \nabla^i \nabla_j f_t \\
& = -\nabla^i (\alpha^*)_i - |\alpha_t|^2 \nabla^i (\alpha^*)_i + (\alpha_t)_i (\alpha_t)_j \nabla^i (\alpha^*)_j.
\end{align*}
We now recall from Statement 3 of Theorem \ref{sibnerthm} that as $t \rightarrow 0$, $\alpha_t$ converges uniformly to $0$. Thus, by taking $T$ small enough, we know that for any $k, \gamma$, there is a constant $C$ such that for any $t \in [0,T]$, 
$$||\alpha_t||_{C^{k,\gamma}} < C.$$
Thus, for $t \in [0,T]$, we obtain a uniform $C^{k,\gamma}$ bound on the coefficients in the PDE. Thus, by Schauder estimates, we obtain a uniform bound 
$$||f_t||_{C^{k+2,\gamma}} < C.$$ 
Applying Arzela-Ascoli, for any $t_n \rightarrow 0$, we find a convergent subsequence in $C^{2,\gamma}$ with limit $f_0$. Then, taking the limit of $\alpha_t \rightarrow 0$ in the PDE, we find that $f_0$ satisfies
$$\Delta f_0 = -d^* (\alpha^*).$$
Thus, $\alpha^* + df_0 = \alpha_H$. As every sequence must have a  subsequence which converges to $f_0$, we obtain that $f_t \rightarrow f_0$. 
\end{proof}

We now turn to the limiting behavior as $t \rightarrow \infty$. In general, we cannot expect convergence of the sequence in a classical sense. However, we can define weak convergence in two equivalent ways: in the sense of currents or in the sense of BV functions. In this paper I will discuss the convergence in the sense of BV functions. 

We will begin by defining the space of functions of bounded variation. The total variation of an $L^1$ function is defined to be
$$TV(df) = \sup_{\beta \in \Lambda^1(M),|\beta|<1} \int_M <f, d^* \beta> dV_g.$$
This allows us to define the space of bounded variation functions.
$$BV(M) = \{ f \in L^1(M) | TV(df) < \infty \}.$$
This space can be given both a strong and a weak topology. The strong topology is given by the norm
$$||f||_{BV} = ||f||_{L^1} + TV(df).$$
It can also be given a weak topology. For $f_k, f_0 \in BV(M)$, we say that
$$f_k \xrightarrow{WBV} f_0 \text{ if } f_k \xrightarrow{L^1} f_0 \text{ and } TV(df_k) \rightarrow TV(df).$$
We now refer to \cite{giusti} for the following two properties of $BV(M)$.
\begin{enumerate}
\item Compactness. \label{compactness}
$$\forall \{f_k\} \in BV, ||f_k||_{BV} < C, \exists f_{k_n} \text{ and } f_0 \text{ s.t. } f_{k_n} \xrightarrow{L^1} f_0.$$
\item Lower Semicontinuity. \label{lsc}
$$\text{If } \{f_k\} \in BV \text{ and } f_k \xrightarrow{L^1} f \text{ then } TV(df) \leq \liminf TV(df_k). $$
\item Density of smooth functions in the weak topology. \label{density}
$$\forall f \in BV(M), \quad \exists f_k \in C^{\infty}(M) \text{ s.t. } f_k \xrightarrow{WBV} f.$$
\end{enumerate}
Because we are in the nontrivial $[\alpha^*]$ cohomology class, we will need to introduce the modified $TV_{\alpha^*}$ energy.  To begin, we find a partition of unity $(U_i, \phi_i)$. Then for a one closed form $\alpha^*$, we can find $g_i$ such that 
$$\alpha^* = \sum_i (\phi_i dg_i).$$
We now define the $TV_{\alpha^*}$ energy to be
$$TV_{\alpha^*}(df) := \sup_{\beta \in \Lambda^1(M), |\beta|\leq 1} \sum_i \int_{U_i} <g_i,d^*(\phi \beta)> + < f,d^* \beta> dV_g.$$
We remark that the lower semicontinuity property \ref{lsc} applies to the $TV_{\alpha^*}$ energy as well.
$$\text{If } f_k \xrightarrow{L^1} f \text{ then } TV_{\alpha^*}(df) \leq \liminf TV_{\alpha^*}(df_k). $$
This follows by applying the property to $g_i + f$ on each of the $U_i$. We also note that in the case $f \in W^{1,1}$, 
$$TV_{\alpha^*}(df) = \int_M |\alpha^* + df| dV_g.$$
We are now ready to restate and prove the second half of Theorem \ref{thm3}.
\begin{proposition} \label{gamma}
Let $\beta_t = \alpha^* + df_t$ be the solutions of the $t-GMS$ equation. Then for  every sequence $t_k \rightarrow \infty$, there exists a subsequence $t_{k_n}$ and a $TV_{\alpha^*}$ minimizer $f_{\infty}$ such that $f_{t_{k_n}} \xrightarrow{WBV} f_{\infty}$. 
\end{proposition}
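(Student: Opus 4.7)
The plan is to establish uniform BV bounds on $f_t$, extract an $L^1$-subsequential limit, and then identify this limit as a $TV_{\alpha^*}$-minimizer via a $\Gamma$-convergence-type argument where the $t$-GMS energies $\Gamma$-converge to $TV_{\alpha^*}$ as $t\to\infty$.

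First I would normalize the potentials. Since $\beta_t = \alpha^* + df_t$ is determined by $\beta_t$ only up to adding a constant to $f_t$, I may assume $\int_M f_t \, dV_g = 0$. Comparing $\beta_t$ to any fixed competitor $\alpha^* + dg$ with $g$ smooth (for example $g \equiv 0$), minimality of $\beta_t$ gives
\begin{equation*}
\int_M |\beta_t|\, dV_g \;\leq\; \int_M \sqrt{t^{-2}+|\beta_t|^2}\, dV_g \;\leq\; \int_M \sqrt{t^{-2}+|\alpha^*|^2}\, dV_g \;\leq\; C,
\end{equation*}
uniformly in $t$ for $t$ bounded away from $0$. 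By the triangle inequality this yields a uniform $L^1$ bound on $df_t$, and combined with the Poincar\'e inequality on $M$ (using the zero-mean normalization), a uniform $L^1$ bound on $f_t$. Hence $\|f_t\|_{BV}$ is uniformly bounded.

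Next I invoke the BV compactness property \ref{compactness} to extract, from any sequence $t_k \to \infty$, a subsequence $t_{k_n}$ and a function $f_\infty \in BV(M)$ with $f_{t_{k_n}} \xrightarrow{L^1} f_\infty$. To identify $f_\infty$ as a $TV_{\alpha^*}$-minimizer I carry out the two halves of a $\Gamma$-convergence argument. For the \emph{liminf} inequality, the lower semicontinuity of $TV_{\alpha^*}$ under $L^1$ convergence and the elementary estimate $TV_{\alpha^*}(df_t) = \int_M |\beta_t|\, dV_g \leq E^t_{GMS}(\beta_t)$ give
\begin{equation*}
TV_{\alpha^*}(df_\infty) \;\leq\; \liminf_{n\to\infty} TV_{\alpha^*}(df_{t_{k_n}}) \;\leq\; \liminf_{n\to\infty} E^{t_{k_n}}_{GMS}(\beta_{t_{k_n}}).
\end{equation*}
For the \emph{limsup} (recovery) inequality, let $g \in BV(M)$ be arbitrary. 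Using the density property \ref{density}, choose smooth $g_j$ with $g_j \xrightarrow{WBV} g$. For each smooth $g_j$, monotone convergence yields $E^t_{GMS}(\alpha^*+dg_j) \to \int_M |\alpha^*+dg_j|\, dV_g = TV_{\alpha^*}(dg_j)$ as $t\to\infty$, and by minimality of $\beta_t$,
\begin{equation*}
E^t_{GMS}(\beta_t) \;\leq\; E^t_{GMS}(\alpha^*+dg_j).
\end{equation*}
Taking $t\to\infty$ then $j\to\infty$ (using weak BV convergence to pass $TV_{\alpha^*}(dg_j) \to TV_{\alpha^*}(dg)$) gives $\limsup_t E^t_{GMS}(\beta_t) \leq TV_{\alpha^*}(dg)$ for every $g\in BV(M)$, so
\begin{equation*}
TV_{\alpha^*}(df_\infty) \;\leq\; \liminf_{n} E^{t_{k_n}}_{GMS}(\beta_{t_{k_n}}) \;\leq\; \inf_{g\in BV} TV_{\alpha^*}(dg),
\end{equation*}
and $f_\infty$ is a $TV_{\alpha^*}$ minimizer.

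Finally, to upgrade $L^1$ convergence to $WBV$ convergence I need $TV_{\alpha^*}(df_{t_{k_n}}) \to TV_{\alpha^*}(df_\infty)$. The liminf direction is lower semicontinuity; the limsup direction uses the chain
\begin{equation*}
\limsup_{n} TV_{\alpha^*}(df_{t_{k_n}}) \;\leq\; \limsup_{n} E^{t_{k_n}}_{GMS}(\beta_{t_{k_n}}) \;\leq\; TV_{\alpha^*}(df_\infty),
\end{equation*}
where the last step applies the recovery inequality with $g = f_\infty$. The main obstacle I anticipate is the passage through the modified functional $TV_{\alpha^*}$ with its partition-of-unity definition, in particular verifying that the density property \ref{density} of smooth functions in the weak BV topology upgrades to $TV_{\alpha^*}(dg_j) \to TV_{\alpha^*}(dg)$; this is where the non-exactness of $\alpha^*$ forces extra bookkeeping, but the lower semicontinuity remark that already appears in the excerpt reduces it to applying the classical BV density/lower semicontinuity chart by chart on the cover $(U_i,\phi_i)$.
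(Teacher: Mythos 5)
Your proposal is correct and follows essentially the same route as the paper's proof: BV compactness plus lower semicontinuity of $TV_{\alpha^*}$ for the liminf inequality, and the density of smooth functions in the weak BV topology together with the bound $\sqrt{t^{-2}+x^2}\leq t^{-1}+|x|$ for the recovery inequality (the paper phrases the latter as a contradiction argument, and itself remarks that the proof is really a $\Gamma$-convergence statement). Your normalization of $f_t$ and the Poincar\'e argument supply the uniform BV bound needed to invoke compactness, a step the paper leaves implicit.
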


\begin{proof}
By property \ref{compactness} of BV function, $\exists t_{k_n}$ and $f_{\infty}$ such that $f_{t_{k_n}} \xrightarrow{L^1} f_{\infty}$. It remains to show that $f_{\infty}$ is a minimizer of the $TV_{\alpha^*}$ energy and that $TV_{\alpha^*}(df_{\infty}) = \lim E^{t_{k_n}}_{GMS} (\beta_{t_{k_n}})$. We show this in two steps. 

First, we show that $TV_{\alpha^*} (df_{\infty}) \leq \liminf E^{t_{k_n}}_{GMS} (\beta_{t_{k_n}})$. We calculate
\begin{align*}
TV_{\alpha^*} (df_{\infty}) & \leq \liminf TV_{\alpha^*} (df_{t_{k_n}}) \\
& = \liminf \int_M |\alpha^* + df_{t_{k_n}}| dV_g \\ 
& = \liminf \int_M |\beta_{t_{k_n}}| dV_g \\ 
& \leq \liminf \int_M \sqrt{ t_{k_n}^{-2} + |\beta_{t_{k_n}}|^2} dV_g \\
& = \liminf E^{t_{k_n}}_{GMS} (\beta_{t_{k_n}})
\end{align*}
The first inequality holds by the lower semicontinuity of the $TV_{\alpha^*}$ energy, while the first equality holds by the definition of the $TV$ energy.

We now let $c = \liminf E^{t_{k_n}}_{GMS}(\beta_{t_{k_n}})$. We claim that $\inf_{f \in BV(M)} TV_{\alpha^*} (df) = c$, and thus $TV_{\alpha^*} (df_{\infty}) = c$ and $df_{\infty}$ is a minimizer of the $TV_{\alpha^*}$ energy. We prove by contradiction. 

Assume that $\exists g \in BV(M)$ and $\delta > 0$ such that $TV_{\alpha^*} (dg) = b < c - 3\delta$. By property \ref{density} of BV functions, we can find $g_k \in C^{\infty}$ such that $g_k \xrightarrow{WBV} g$. Thus, $TV_{\alpha^*}(dg_k) \rightarrow TV_{\alpha^*}(dg)$. We take $k$ large such that $TV_{\alpha^*} (dg_k) < c-2\delta$. We claim that for $t > \frac{\text{Vol}(M)}{\delta}$, $E^t_{GMS} (\alpha^* + dg_k) < c - \delta$. Indeed,
\begin{align*}
E^t_{GMS}(\alpha^* + dg_k) & = \int_M \sqrt{t^{-2} + |\alpha^* + dg_k|^2} dV_g \\
& \leq \int_M (t^{-1} + |\alpha^* + dg_k|) dV_g \\
& = \frac{\text{Vol}(M)}{t} + TV_{\alpha^*}(dg_k) \\
& < c - \delta
\end{align*}
On the other hand, $c = \liminf E^t_{GMS}(\alpha^* + df_t)$, so $E^t_{GMS} (\alpha^* + df_t) > c - \delta$ for $t$ large enough. As $\alpha^* + df_t$ minimizes the $E^t_{GMS}$ energy by definition, we have reached a contradiction. 
\end{proof}

\begin{remark*}
Our proof really shows that the $E^t_{GMS}(\alpha^* + df) \xrightarrow{\Gamma} TV_{\alpha^*}(df)$. $\Gamma$-convergence is typically shown by proving the "$\limsup$" and "$\liminf$" inequalities. (See \cite{braides}.) These are the two inequalities show above.
\end{remark*}

\section{Explicit Solutions on $S^k \times S^k$}\label{examples}
We will construct an explicit family of GMS $k$ forms on a metric conformal to the standard spherical metric on $S^k \times S^k$. For clarity, we will write $S^k_1 \times S^k_2$ to distinguish the two copies of $S^k$. These solutions will exhibit singularities in finite cohomology class for $k \geq 2$. We let $g_{E} = d\xi_{S^k_1}^2 + d\xi^2_{S^k_2}$ be the standard spherical metric on $S^k_1 \times S^k_2$. We will study $k$-forms which are solutions of the GMS equation with respect metrics which are in the conformal class $[g_E]$. In particular, we note that we can write $d\xi_{S^k_2}$ in spherical coordinates $d\theta^2 + \sin^2(\theta) d\xi^2_{S^{k-1}}$. We study metrics of the form $g_h = h^{-2}(\theta) g_E$ where $h(\theta)$ is a smooth positive function on $[0,\pi]$ which has a unique maximum at $\theta = 0$. We also require the compatibility condition that all of the odd derivatives $h^{(2k+1)}(0) = h^{(2k+1)}(\pi) = 0$ to ensure that $h$ is smooth at $\theta = 0$ and $\theta = \pi$. This condition follows from the fact that a radially symmetric function $h:\mathbb{R}^n \rightarrow \mathbb{R}$ is smooth at the origin iff its odd derivatives vanish at the origin.

We recall that the de Rham cohomology $H^2(S^k \times S^k) \cong \mathbb{R}^2$, where the cohomology classes are represented by $[\kappa_1 dV_{S^k_1} + \kappa_2 dV_{S^k_2}]$. We also note that we can calculate $\kappa_i$ by integrating over a submanifold homologous to a copy of $S^k_i \hookrightarrow S^k_1 \times S^k_2$. Setting $\omega_k$ as the volume of the unit sphere in Euclidean space, we find that
$$\kappa_1 = \frac{1}{\omega_k} \int_{S^k_1 \times \{0\}} \alpha \quad \text{ and } \quad 
\kappa_2 = \frac{1}{\omega_k} \int_{\{0\} \times S^k_2} \alpha.$$

We now consider the $k$-forms $\kappa dV_{S^k_2}$, multiples of the standard volume form on the second $S^k$ factor. We note that such forms are harmonic in the conformal class $[g_E]$ since harmonic $k$-forms are invariant under conformal change of metric in dimension $2k$. We now look for GMS solutions in the cohomology class $[\kappa dV_{S^k_2}]$.  
\begin{proposition}\label{cohomology}
Let $\alpha \in [\kappa dV_{S^k_2}]$ be a GMS solution. Then $\exists f(\theta)$ such that
$$\alpha = f(\theta) dV_{S^k_2}.$$
Furthermore,
$$\kappa = \frac{\omega_{k-1}}{\omega_k} \int_0^{\pi} f(\theta) \sin^{k-1}(\theta) d\theta.$$
\end{proposition}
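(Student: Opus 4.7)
The plan is to exploit the large symmetry group of the warped metric $g_h = h^{-2}(\theta) g_E$ together with the uniqueness of GMS minimizers from the lemma in Section~\ref{prelim}. First I identify the isometry group $G := SO(k+1) \times SO(k)$, where the first factor rotates $S^k_1$ freely and the second rotates $S^k_2$ about the axis through $\theta = 0$; the latter preserves $\theta$ and hence $h$, so both factors act by isometries of $g_h$. The reference form $\kappa\, dV_{S^k_2}$ is fixed pointwise by $G$, since $dV_{S^k_2}$ is the volume form of the round metric on $S^k_2$ (hence $SO(k)$-invariant) and is unaffected by the action on $S^k_1$. Consequently, for any $g \in G$, the pullback $g^*\alpha$ lies in the same cohomology class $[\kappa\, dV_{S^k_2}]$ and still solves the GMS equation; by the uniqueness part of the preliminary lemma, $g^*\alpha = \alpha$ for every $g \in G$.

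The second step is to classify $G$-invariant $k$-forms using the bidegree decomposition $\Lambda^k = \bigoplus_{p+q=k} \Lambda^{p,q}$. Each $SO(k+1)$-invariant $p$-form on $S^k_1$ is determined by its value at a point, where it must be invariant under the isotropy $SO(k)$ acting by the standard representation on the tangent space; these invariants are nontrivial only in degrees $0$ and $k$. The same argument handles the rotational invariance on $S^k_2$, where additionally an $SO(k)$-invariant function reduces to a function of $\theta$ alone, while an $SO(k+1)$-invariant function on $S^k_1$ must be constant. Only the bidegrees $(0,k)$ and $(k,0)$ survive, so
\[
\alpha = f(\theta)\, dV_{S^k_2} + g(\theta)\, dV_{S^k_1}.
\]

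Finally, I impose closedness and the cohomological condition. Since $dV_{S^k_2} = \sin^{k-1}(\theta)\, d\theta \wedge dV_{S^{k-1}}$ already contains $d\theta$ as a factor, the first summand is automatically closed, while $d\bigl(g(\theta) dV_{S^k_1}\bigr) = g'(\theta)\, d\theta \wedge dV_{S^k_1}$ forces $g$ to be constant. Evaluating $\kappa_1 = \frac{1}{\omega_k}\int_{S^k_1 \times \{0\}} \alpha = g$ and demanding $[\alpha] = [\kappa\, dV_{S^k_2}]$ gives $g = 0$, whence $\alpha = f(\theta)\, dV_{S^k_2}$. The formula for $\kappa$ then follows by Fubini from
\[
\int_{\{0\} \times S^k_2} f(\theta)\, dV_{S^k_2} = \left(\int_0^\pi f(\theta)\sin^{k-1}(\theta)\, d\theta\right)\cdot \int_{S^{k-1}} dV_{S^{k-1}} = \omega_{k-1}\int_0^\pi f(\theta)\sin^{k-1}(\theta)\, d\theta,
\]
dividing by $\omega_k$. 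The main conceptual step is the representation-theoretic classification of $G$-invariant $k$-forms; everything else is a short verification once that classification is in place.
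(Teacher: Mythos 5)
Your proof is correct and follows essentially the same strategy as the paper: isometry invariance of $\alpha$ forced by the uniqueness of GMS minimizers, the bidegree decomposition to reduce to $f(\theta)\,dV_{S^k_2} + g(\theta)\,dV_{S^k_1}$, and closedness plus the cohomology class to eliminate the second summand. One small imprecision worth noting: the axial $SO(k)$ action on $S^k_2$ is not transitive and has generic isotropy $SO(k-1)$, so it does admit invariant forms in intermediate degrees $1$ and $k-1$ (e.g.\ $d\theta$ and $dV_{S^{k-1}}$); this does not affect your conclusion, because $SO(k+1)$-invariance on the $S^k_1$ factor already forces the bidegree to be $(0,k)$ or $(k,0)$.
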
 
\begin{proof}
We begin by decomposing the space
$$\Lambda^k(M) = \bigoplus_{i=0}^k \Lambda^{k-i}(S^k_1) \wedge \Lambda^i(S^k_2).$$
We can then decompose $\alpha$ into its orthogonal components
$$\alpha = \sum_{i=0}^k \alpha_i \qquad \alpha_i \in \Lambda^{k-i}(S^k_1) \wedge \Lambda^i(S^k_2).$$
We first show that $\alpha_i = 0$ for $1 \leq i \leq k-1$. Assume that in local coordinates about a point $p$, 
$$\alpha_i = \sum_{1 \leq j_1 < \ldots j_i \leq k} \gamma_{j_1 \ldots j_i} \wedge dx^{j_1} \wedge \ldots \wedge dx^{j_i}.$$
Here, $\gamma_{j_1 \ldots j_i} \in \Lambda^{k-i}(S^k_1)$. As $\alpha_i \neq 0$, we know that for some $j_1 \ldots j_i$, $\gamma_{j_1 \ldots j_i}(p) \neq 0$. Choose an element $\phi \in SO(k)$ which fixes the point $p$ and such that $\phi^*(\gamma_{j_1 \ldots j_i})(p) \neq \gamma_{j_1 \ldots j_i}(p)$. (This can be done as long as $i \neq 0$.) Then
$$(\phi^*(\alpha)(p))_i = \phi^*(\alpha_i)(p) =  \sum_{1 \leq j_1 < \ldots j_i \leq k} \phi^*(\gamma_{j_1 \ldots j_i})(p) \wedge dx^{j_1} \wedge \ldots \wedge dx^{j_i}.$$
Thus, we find that $\phi^*(\alpha) \neq \alpha$. On the other hand, because $\phi$ is an isometry, $\phi^*(\alpha)$ is also a GMS solution. However, this contradicts the uniqueness of GMS solutions. Thus, $\alpha_i = 0$ for $1 \leq i \leq k-1$. Therefore, we obtain that $\exists f_1, f_2: M \rightarrow \mathbb{R}$ such that
$$\alpha = \alpha_0 + \alpha_k = f_1(x) dV_{S^k_1} + f_2(x) dV_{S^k_2}.$$ 
We now claim that for $i = 1,2$, $f_i$ depend only on the variable $\theta$. Indeed, if $p, q \in M$ are two points such that $\theta(p) = \theta(q)$ but $f_i(p) \neq f_i(q)$, then $\exists \phi$ an isometry of $M$ which fixes the level sets of $\theta$ but $\phi(p) = q$, which violates the uniqueness of GMS solutions, as described above. Thus,
$$\alpha = f_1(\theta)dV_{S^k_1} + f_2(\theta)dV_{S^k_2}.$$
We now apply the fact that $\alpha$ is closed.
$$ d\alpha = \frac{\partial f_1(\theta)}{\partial \theta}  d\theta \wedge dV_{S^k_1} = 0.$$
Thus, $\exists \kappa_1$ such that $f_1 \equiv \kappa_1$ and
$$\alpha = \kappa_1 dV_{S^k_1} + f_2(\theta) dV_{S^k_2}.$$
As $\alpha \in [\kappa dV_{S^k_2}]$, we find that $\kappa_1 = 0$ and 
\begin{align*}
\kappa & = \frac{1}{\omega_k} \int_{\{0\} \times S^k_2} f(\theta) dV_{S^k_2} \\
& = \frac{\omega_{k-1}}{\omega_k} \int_0^{\pi} f(\theta) \sin^{k-1}(\theta)d\theta.
\end{align*}
\end{proof}
We now calculate the actual GMS solution.
\begin{theorem} \label{explicit}
Consider the manifold $(S^k_1 \times S^k_2, g_h)$, using the notation described above. Let $c^* = h^{-k}(0)$ and let $c \in (-c^*,c^*)$ be a constant. Let 
$$f_c(\theta) = \frac{c}{\sqrt{1-c^2h^{2k}(\theta)}},\quad \text{ and } \quad
\kappa_c = \frac{\omega_{k-1}}{\omega_k} \int_0^{\pi} f_c(\theta) \sin^{k-1}(\theta) d \theta.$$
Then the $k$-form
$$\alpha_c := f_c(\theta)dV_{S^k_2} \in [\kappa_c dV_{S^k_2}]$$
is the unique GMS solution in $[\kappa_c dV_{S^k_2}]$. Furthermore, let $\kappa^* = \lim_{c \rightarrow c^*} \kappa_c$. Then the cohomology class $[\kappa dV_{S^k_2}]$ has a GMS solution iff 
$$|\kappa| < \kappa^*.$$
\end{theorem}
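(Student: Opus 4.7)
The plan is to reduce the GMS equation on $(S^k_1 \times S^k_2, g_h)$ to a scalar ODE in $\theta$ and then analyze the resulting one-parameter family of solutions. By Proposition~\ref{cohomology}, any GMS solution in $[\kappa \, dV_{S^k_2}]$ must take the form $\alpha = f(\theta) \, dV_{S^k_2}$ for a smooth function $f$ of the polar variable. Setting $F(\theta) := f(\theta)/\sqrt{1+h^{2k}(\theta) f^2(\theta)}$, the GMS equation becomes $d^*_{g_h}(F \, dV_{S^k_2}) = 0$. The key computation is this: since the Hodge star on $k$-forms in dimension $2k$ is conformally invariant, $*_{g_h}(F \, dV_{S^k_2}) = \pm F \, dV_{S^k_1}$; exterior differentiation gives $\pm F'(\theta) \, d\theta \wedge dV_{S^k_1}$; and applying $*_{g_h}$ to this $(k+1)$-form picks up a conformal factor $h^2$ and a straightforward $(k-1)$-form on the $S^{k-1}$-slice. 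The upshot is that the entire GMS system collapses to $F'(\theta) \equiv 0$.

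Integrating, $F \equiv c$ for some constant, and solving $f/\sqrt{1 + h^{2k} f^2} = c$ algebraically for $f$ yields exactly $f_c(\theta) = c/\sqrt{1-c^2 h^{2k}(\theta)}$. For $f_c$ to extend to a smooth form on $S^k_1 \times S^k_2$, we need $1-c^2 h^{2k}(\theta) > 0$ for all $\theta$; since $h$ attains its unique maximum at $\theta = 0$, this is precisely $|c| < h^{-k}(0) = c^*$, with the parity conditions on the derivatives of $h$ guaranteeing smoothness at the poles. Uniqueness of $\alpha_c$ in its cohomology class then follows from the strict convexity/uniqueness lemma of Section~\ref{prelim}.

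For the ``iff'' statement, it suffices to show that $c \mapsto \kappa_c$ is a continuous, strictly increasing bijection $(-c^*, c^*) \to (-\kappa^*, \kappa^*)$. Differentiating under the integral,
$$\frac{\partial f_c}{\partial c}(\theta) = \frac{1}{(1-c^2 h^{2k}(\theta))^{3/2}} > 0,$$
so $\kappa_c$ is strictly increasing in $c$; it is continuous, odd in $c$ with $\kappa_0 = 0$, and by definition $\lim_{c \to \pm c^*} \kappa_c = \pm \kappa^*$. Hence for any $\kappa$ with $|\kappa| < \kappa^*$ there is a unique $c \in (-c^*, c^*)$ with $\kappa_c = \kappa$, producing the GMS solution $\alpha_c \in [\kappa \, dV_{S^k_2}]$. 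Conversely, any smooth GMS solution in $[\kappa \, dV_{S^k_2}]$ must, by Proposition~\ref{cohomology} and the ODE analysis above, equal $\alpha_c$ for some $|c| < c^*$, whence $|\kappa| = |\kappa_c| < \kappa^*$; this rules out solutions when $|\kappa| \geq \kappa^*$.

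The main technical obstacle is the conformal and Hodge-star bookkeeping that collapses the GMS equation to $F' = 0$, since one must carefully track how $*$, $d^*$, and the pointwise norm $|\cdot|^2$ transform between $g_E$ and $g_h = h^{-2} g_E$ on forms of degree $k$ and $k+1$. Once that single computation is in hand, the rest is routine ODE integration together with the monotonicity argument above.
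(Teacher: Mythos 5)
Your proof is correct and takes essentially the same route as the paper: reduce to $f(\theta)\,dV_{S^k_2}$ via Proposition~\ref{cohomology}, collapse the GMS equation to the ODE $F'(\theta)=0$ for $F=f/\sqrt{1+h^{2k}f^2}$, solve algebraically for $f_c$, and use monotonicity of $c\mapsto\kappa_c$ for the ``iff.'' You in fact supply two details the paper leaves implicit, namely the conformal-invariance/Hodge-star computation justifying the ODE reduction and the differentiation under the integral sign establishing strict monotonicity of $\kappa_c$.
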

In Figure \ref{fig:s1xs1}, we plot $|\alpha_c(\theta)|$ for several values of $c$ on the manifold $(S^1 \times S^1,g_h)$ in the case where $h^2(\theta) = 1 + \cos^2(\theta)$. We see that as $c$ grows to its maximal value of $c^* = \frac{1}{\sqrt{2}}$, the solution becomes singular. 
\begin{figure}[ht!]
\centering
\caption{$|\alpha_c|$ on $S^1 \times S^1$}
\begin{tikzpicture}
\begin{axis}[axis lines=middle,xlabel=$S^1$,ylabel=$|\alpha_c|$, enlargelimits,legend cell align={left}, legend pos = north west]
\addplot[domain=-pi:pi-.1,samples=100,red] {.5*sqrt(1+cos(deg(x)/2)^2)/sqrt(1-.5^2*(1+cos(deg(x)/2)^2)};
\addplot[domain=-pi:pi-.1,samples=100,blue] {.6*sqrt(1+cos(deg(x)/2)^2)/sqrt(1-.6^2*(1+cos(deg(x)/2)^2)};
\addplot[domain=-pi:pi-.1,samples=100,green] {.7*sqrt(1+cos(deg(x)/2)^2)/sqrt(1-.7^2*(1+cos(deg(x)/2)^2)};		
\addplot[domain=-pi:pi-.1,samples=100,purple] {.705*sqrt(1+cos(deg(x)/2)^2)/sqrt(1-.705^2*(1+cos(deg(x)/2)^2)};	
\legend{$c=.5$,$c=.6$,$c=.7$, $c=.705$}
\end{axis}
\end{tikzpicture}
\label{fig:s1xs1}
\end{figure}
\begin{proof} We note that by Proposition \ref{cohomology}, any GMS solution is necessarily of the form
$$f(\theta) dV_{S^k_2}.$$
We write down the GMS equation for such forms.
$$
d^* \left( \frac{f(\theta)}{\sqrt{1+h^{2k}(\theta)f^2(\theta)}} dV_{S^k_2} \right) = 0.
$$
However, because $dV_{S^k_2}$ is harmonic, we know that $d^*(dV_{S^k_2}) = 0$. Thus, we find that the equation reduces to the ODE
$$ \frac{d}{d \theta} \left( \frac{f(\theta)}{\sqrt{1+h^{2k}(\theta) f^2(\theta)}} \right) = 0.$$
We solve this ODE to find that $\exists c$ such that
$$f(\theta) = \frac{c}{\sqrt{1-c^2h^{2k}(\theta)}}.$$
Thus, every GMS solution is necessarily of the form $f_c(\theta) dV_{S^k_2}$. Additionally, from Proposition \ref{cohomology} we know that 
$$f_c(\theta) dV_{S^k_2} \in [\kappa_c dV_{S^k_2}].$$
From the definition of $f_c$, we see that $f_c$ is well defined iff $c^2 h^{2k}(\theta) < 1$ for all $\theta \in [0,\pi]$. As $h$ was chosen to attain its maximum at $\theta = 0$, we find that $f_c$ is well defined iff $c \in (-h^{-k}(0), h^{-k}(0)) = (-c^*, c^*)$. As $\kappa_c$ is a monotone, odd function of $c$, we find that the cohomology class has a solution iff 
$$|\kappa| < \lim_{c \rightarrow (c^*)^-} \kappa_c = |\kappa^*|.$$
\end{proof}
\begin{corollary}
For $k \geq 2$, there exist cohomology classes $[\kappa dV_{S^k_2}]$ with no GMS minimizer.
\end{corollary}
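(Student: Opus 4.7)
The plan is to deduce the corollary directly from Theorem \ref{explicit}: a GMS solution exists in $[\kappa dV_{S^k_2}]$ precisely when $|\kappa| < \kappa^*$, so it suffices to exhibit a choice of metric $g_h$ (satisfying the stated hypotheses) for which $\kappa^* = \lim_{c \to (c^*)^-} \kappa_c$ is finite. Any $\kappa$ with $|\kappa| \geq \kappa^*$ then yields a cohomology class with no GMS minimizer. In fact, I expect this to hold for essentially any admissible $h$ when $k \geq 2$, so one does not need to construct a delicate example; one just needs to control the integrand of
$$\kappa_c = \frac{\omega_{k-1}}{\omega_k}\int_0^\pi \frac{c\,\sin^{k-1}(\theta)}{\sqrt{1 - c^2 h^{2k}(\theta)}}\,d\theta$$
uniformly in $c$.

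The next step is to localize the obstruction to finiteness. Near $\theta = \pi$ the factor $\sin^{k-1}(\theta)$ vanishes while $h^{2k}(\pi) < h^{2k}(0) = (c^*)^{-2}$, so the integrand is uniformly bounded there. The only potentially bad point is $\theta = 0$, where the denominator degenerates as $c \to c^*$. Since $h$ is smooth at the origin, the odd-derivative condition forces $h'(0) = 0$, and since $\theta = 0$ is a strict maximum we have $h''(0) < 0$. A Taylor expansion therefore gives
$$1 - (c^*)^2 h^{2k}(\theta) = -k\,\frac{h''(0)}{h(0)}\,\theta^2 + O(\theta^4),$$
a positive quantity vanishing to order exactly $\theta^2$. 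Consequently $\sqrt{1 - (c^*)^2 h^{2k}(\theta)} \sim C\,\theta$ near $\theta = 0$ for some $C > 0$, and the limiting integrand behaves like
$$\frac{c^*\sin^{k-1}(\theta)}{C\,\theta} \sim \frac{c^*}{C}\,\theta^{k-2},$$
which is bounded (in particular integrable) on $[0,\pi]$ precisely when $k \geq 2$.

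Finally I would justify the passage $c \to c^*$ under the integral. Since $h^{2k}(\theta) \leq h^{2k}(0) = (c^*)^{-2}$ with equality only at $\theta = 0$, the map $c \mapsto c\,\sin^{k-1}(\theta)/\sqrt{1 - c^2 h^{2k}(\theta)}$ is monotone increasing in $c$ on $(0,c^*)$ for each fixed $\theta \in (0,\pi]$. Monotone convergence then identifies $\kappa^*$ with the integral of the pointwise limit, which the preceding asymptotic bounds. Hence $\kappa^* < \infty$, and by Theorem \ref{explicit} no GMS solution exists in $[\kappa dV_{S^k_2}]$ whenever $|\kappa| \geq \kappa^*$. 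The only substantive step is the asymptotic analysis above; it relies essentially on the non-degeneracy of the maximum of $h$ at $\theta = 0$, which is exactly what the smoothness hypothesis enforces. The contrast with $k = 1$, in which $\theta^{k-2} = \theta^{-1}$ fails to be integrable and $\kappa^* = \infty$, is consistent with Theorem \ref{thm1}.
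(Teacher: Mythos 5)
Your argument is correct and follows essentially the same route as the paper: reduce via Theorem \ref{explicit} and monotone convergence to showing $\kappa^* < \infty$, then check that the limiting integrand behaves like $\theta^{k-2}$ near $\theta = 0$, which is integrable exactly when $k \geq 2$. The one imprecision is your claim that a strict maximum forces $h''(0) < 0$ (it only gives $h''(0) \leq 0$); since the corollary is an existence statement you should, as the paper does, simply \emph{choose} $h$ with $h''(0) \neq 0$, after which your Taylor expansion goes through verbatim.
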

\begin{proof}
By Theorem \ref{explicit}, a cohomology class $[\kappa dV_{S^k_2}]$ has a GMS minimizer iff $|\kappa| < \kappa^*$. Thus, to find a cohomology class with no minimizer, it will suffice to find an example where $\kappa^* < \infty$. By definition,
$$\kappa^* = \lim_{c \rightarrow (c^*)^-} \int_0^\pi \frac{c}{\sqrt{1-(c)^2 h^{2k}(\theta)}} \sin^{k-1}(\theta) d\theta.$$
As $\kappa_c$ is monotone in $c$, the monotone convergence theorem tells us that 
$$\kappa^* = \int_0^\pi \frac{c^*}{\sqrt{1-(c^*)^2 h^{2k}(\theta)}} \sin^{k-1}(\theta) d\theta.$$
We now examine under what conditions this integral will be finite. We recall that in choosing $h$ we require that $h'(0) = 0$. However, we allowed $h''(0) \neq 0$. If we choose $h$ with $h''(0) \neq 0$, we observe that the function $\sqrt{1-(c^*)^2 h^{2k}(\theta)} \sim \theta$ around $\theta = 0$. We then find that the function 
$$ \frac{c^*}{\sqrt{1-(c^*)^2 h^{2k}(\theta)}} \sin^{k-1}(\theta) \sim \theta^{k-2}.$$
Thus, the integral
$$\int_0^\pi \frac{c^*}{\sqrt{1-(c^*)^2 h^{2k}(\theta)}} \sin^{k-1}(\theta) d\theta$$
is infinite in the case $k=1$, but finite in the case $k\geq 2$. This implies that for $k\geq 2$, we do not have a minimizer in every cohomology class.
\end{proof}
\section{Application to the Born Infeld Energy} \label{BI}
In this section, we discuss the relationship of the GMS energy to the Born Infeld energy in four dimensions. As mentioned in Section \ref{intro}, Born and Infeld introduced the Lagrangian
$$ E_{BI}(F) = \int_M \sqrt{\det (g-F)} dV_g.$$
When $M$ is a three dimensional Riemannian manifold, the Born Infeld energy reduces to the GMS energy. This can be seen by taking normal coordinates at a point. With $g = I$, we find
$$\det(I-F) = \det \left( \begin{array}{ccc}
1 & -F_{12} & -F_{13} \\ 
F_{12} & 1 & -F_{23} \\ 
F_{13} & F_{23} & 1
\end{array} \right) = 1 + |F|^2.$$
Thus the energy reduces to
$$E_{BI}(F) = \int_M \sqrt{1+|F|^2} dV_g.$$
which is the GMS energy. In four dimensions, the Born Infeld energy is more complicated. However, in special cases it reduces the GMS energy, as we will discuss.  
\begin{proposition}
Given $(M^4,g)$, $F \in \Lambda^2(M)$, the Born Infeld energy is given by
$$E_{BI}(F) = \int_M \sqrt{1+|F|^2 + \frac{1}{4}|F \wedge F|^2}dV_g.$$
\end{proposition}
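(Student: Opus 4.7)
The plan is to reduce the statement to a pointwise linear-algebra identity, exactly as in the three-dimensional computation already carried out in the section. Choose normal coordinates at $p \in M$ so that $g_{ij}(p) = \delta_{ij}$, and identify the $2$-form $F = \sum_{i<j} F_{ij}\, dx^i \wedge dx^j$ with the $4 \times 4$ skew-symmetric matrix $F^{\#}$ whose $(i,j)$ entry is $F_{ij}$ for $i<j$ and $-F_{ji}$ for $i>j$. The claim then reduces to the pointwise identity $\det(I - F^{\#}) = 1 + |F|^2 + \tfrac{1}{4}|F \wedge F|^2$.

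To handle the determinant, I would expand
\begin{equation*}
\det(I - F^{\#}) = 1 - e_1(F^{\#}) + e_2(F^{\#}) - e_3(F^{\#}) + e_4(F^{\#}),
\end{equation*}
where $e_k$ denotes the sum of the $k \times k$ principal minors. Skew-symmetry kills the easy terms: the diagonal of $F^{\#}$ vanishes, so $e_1(F^{\#}) = 0$, and every $3 \times 3$ principal submatrix of $F^{\#}$ is itself odd-order skew-symmetric and hence singular, so $e_3(F^{\#}) = 0$. A direct evaluation of a $2 \times 2$ principal minor at indices $i<j$ gives $F_{ij}^2$, so $e_2(F^{\#}) = \sum_{i<j} F_{ij}^2 = |F|^2$. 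This part of the argument is mechanical, and the real content concentrates in $e_4(F^{\#}) = \det(F^{\#})$.

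The final step is to identify $\det(F^{\#})$ with $\tfrac{1}{4}|F \wedge F|^2$, and this is where I expect the main (mild) obstacle: one must invoke the classical Pfaffian identity $\det(A) = \operatorname{Pf}(A)^2$ for a $4 \times 4$ skew-symmetric matrix, with $\operatorname{Pf}(F^{\#}) = F_{12}F_{34} - F_{13}F_{24} + F_{14}F_{23}$. A straightforward expansion of $F \wedge F$ in the orthonormal basis $dx^1, \dots, dx^4$ yields
\begin{equation*}
F \wedge F = 2\bigl(F_{12}F_{34} - F_{13}F_{24} + F_{14}F_{23}\bigr)\, dV_g = 2\operatorname{Pf}(F^{\#})\, dV_g,
\end{equation*}
so $|F \wedge F|^2 = 4\operatorname{Pf}(F^{\#})^2 = 4\det(F^{\#})$. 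Assembling the surviving contributions to $\det(I - F^{\#})$ and integrating against $dV_g$ over $M$ produces the stated formula. If one prefers to avoid citing the Pfaffian identity, a brute-force cofactor expansion of the $4 \times 4$ determinant against the six independent coefficients of $F$ matches the two sides directly; either way the argument is purely algebraic, with no analytic input beyond the choice of normal coordinates.
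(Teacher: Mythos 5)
Your proof is correct and takes essentially the same route as the paper: reduce to the pointwise algebraic identity $\det(I - F^{\#}) = 1 + |F|^2 + \tfrac{1}{4}|F\wedge F|^2$ in normal coordinates and integrate. The paper simply evaluates the $4\times 4$ determinant directly and writes the quartic term as $\left\langle (F_{12},F_{13},F_{14}),(F_{34},-F_{24},F_{23})\right\rangle^2$, which is exactly the square of the Pfaffian you identify; your principal-minor expansion with $e_1 = e_3 = 0$, $e_2 = |F|^2$, $e_4 = \operatorname{Pf}(F^{\#})^2$ is just a more systematic organization of the same computation.
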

\begin{proof}
Once again, we take normal coordinates about a point. We calculate
\begin{align*}
 \det \left( \begin{array}{cccc}
1 & -F_{12} & -F_{13} & -F_{14} \\ 
F_{12} & 1 & -F_{23} & -F_{24} \\ 
F_{13} & F_{23} & 1 & -F_{34} \\ 
F_{14} & F_{24} & F_{34} & 1
\end{array} \right)
& =  1 + |F|^2 + \left< 
\left( \begin{array}{c}
F_{12} \\ 
F_{13} \\ 
F_{14}
\end{array} \right),  
\left( \begin{array}{c}
F_{34} \\ 
-F_{24} \\ 
F_{23}
\end{array} \right)
\right>^2. \\
& = 1 + |F|^2 + \frac{1}{4} |F \wedge F|^2.
\end{align*}
\end{proof}
\begin{corollary}\label{energy}
Let $E_{BI}$ be the Born Infeld energy, $E_H$ be the standard Hodge energy, and $E_{GMS}$ be the generalized minimal surface energy defined in Section \ref{intro}. Then for all $F \in \Lambda^2(M)$,
$$E_{GMS}(F) \leq E_{BI}(F) \leq \text{Vol}(M) + \frac{1}{2}E_H(F).$$
Additionally, if $F$ is self-dual or anti-self-dual, $E_{BI}(F) = \text{Vol}(M) + \frac{1}{2}E_H(F)$. On the other hand, $E_{GMS}(F) = E_{BI}(F)$ iff $|F_+| \equiv |F_-|$.
\end{corollary}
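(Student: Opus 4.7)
The plan is to verify all three assertions pointwise using the self-dual/anti-self-dual decomposition of $2$-forms in dimension four. Writing $F = F_+ + F_-$ with $F_\pm = \tfrac{1}{2}(F \pm *F)$ and $*F_\pm = \pm F_\pm$, orthogonality of the two eigenspaces gives $|F|^2 = |F_+|^2 + |F_-|^2$, while the identity $\alpha \wedge *\beta = \langle \alpha,\beta\rangle \, dV_g$ yields
\[
F \wedge F \;=\; F \wedge *(*F) \;=\; \langle F, *F\rangle \, dV_g \;=\; (|F_+|^2 - |F_-|^2)\, dV_g,
\]
so $|F \wedge F| = \bigl| |F_+|^2 - |F_-|^2 \bigr|$. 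Setting $u := |F_+|^2$ and $v := |F_-|^2$, the Born--Infeld integrand produced by the preceding proposition becomes
\[
\sqrt{1 + u + v + \tfrac{1}{4}(u-v)^2}.
\]

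With this reduction, each of the three assertions is elementary algebra. For $E_{GMS}(F) \leq E_{BI}(F)$, I simply drop the non-negative term $\tfrac{1}{4}(u-v)^2$ to recover $\sqrt{1+|F|^2}$; pointwise equality forces $u = v$, giving the characterization $|F_+| \equiv |F_-|$ of $E_{GMS}(F) = E_{BI}(F)$. For the upper bound $E_{BI}(F) \leq \text{Vol}(M) + \tfrac{1}{2} E_H(F)$, squaring both integrands I need $1 + u + v + \tfrac{1}{4}(u-v)^2 \leq \bigl(1 + \tfrac{u+v}{2}\bigr)^2 = 1 + (u+v) + \tfrac{1}{4}(u+v)^2$, which collapses to $(u-v)^2 \leq (u+v)^2$, equivalently $uv \geq 0$; this holds trivially since $u,v \geq 0$.

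Finally, if $F$ is self-dual (respectively anti-self-dual) then $v \equiv 0$ (respectively $u \equiv 0$), and in either case
\[
1 + u + v + \tfrac{1}{4}(u-v)^2 \;=\; 1 + (u+v) + \tfrac{(u+v)^2}{4} \;=\; \Bigl(1 + \tfrac{|F|^2}{2}\Bigr)^2,
\]
so the Born--Infeld integrand reduces to $1 + \tfrac{|F|^2}{2}$, which integrates to $\text{Vol}(M) + \tfrac{1}{2} E_H(F)$. No step presents a real obstacle: the corollary is just an algebraic unpacking of the determinant formula using $\Lambda^2 = \Lambda^2_+ \oplus \Lambda^2_-$, and the only thing to be careful about is the normalization conventions that produce the identity $|F \wedge F| = \bigl| |F_+|^2 - |F_-|^2 \bigr|$.
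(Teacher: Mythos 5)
Your proof is correct and follows essentially the same route as the paper: both arguments reduce to the pointwise inequality $|F\wedge F|^2 \leq |F|^4$ (in your notation $(u-v)^2 \leq (u+v)^2$) together with the identity $|F\wedge F| = \bigl||F_+|^2-|F_-|^2\bigr|$ to pin down the equality cases. Your version merely introduces the self-dual decomposition at the outset rather than only for the equality discussion, which makes the characterizations slightly more transparent but is not a different argument.
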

\begin{proof}
Because 
$$ 0 \leq |F \wedge F|^2 \leq |F|^4 $$
we obtain that  
$$\int_M \sqrt{1+|F|^2} dV_g \leq \int_M \sqrt{1+|F|^2 + \frac{1}{4} |F \wedge F|^2} dV_g \leq \int_M (1 + \frac{1}{2}|F|^2) dV_g.$$
The first inequality becomes equality iff $|F \wedge F| = \big||F_+|^2 - |F_-|^2\big| \equiv 0$, or if $|F_+| \equiv |F_-|$. The second inequality become equality if $|F \wedge F| = \big| |F_+|^2 - |F_-|^2 \big| \equiv |F|^2$. Clearly, any self-dual or anti-self-dual $F$ satisfy this condition. 
\end{proof}
We now calculate the Euler Lagrange equation for the Born Infeld energy. 
\begin{proposition}
$F$ is a critical point of the Born Infeld energy in four dimensions iff 
$$ d^* \left( \sqrt{\det(I - g^{-1}F)}g (I - (g^{-1}F)^2)^{-1} g^{-1} F \right) = 0.$$
\end{proposition}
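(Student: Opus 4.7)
The plan is to compute the first variation of $E_{BI}$ along variations $F(t) = F + t\,d\psi$ with $\psi \in \Lambda^1(M)$ arbitrary; such variations preserve the cohomology class $[F]$. The central computational tool is Jacobi's formula $\frac{d}{dt}\det(A(t)) = \det(A)\operatorname{tr}(A^{-1}\dot A)$. Setting $A := g^{-1}F$ and factoring $\det(g-F) = \det(g)\det(I-A)$, this gives
$$\frac{d}{dt}\bigg|_{t=0}\sqrt{\det(g-F(t))} = -\tfrac{1}{2}\sqrt{\det(g-F)}\,\operatorname{tr}\bigl((I-A)^{-1}g^{-1}d\psi\bigr).$$

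To put this in the form stated in the proposition, I would exploit the factorization
$$(I-A)^{-1} = (I-A^2)^{-1}(I+A),$$
valid because $I \pm A$ commute. Working in a local $g$-orthonormal frame, $A$ is a skew-symmetric matrix, so $(I-A^2)^{-1}$ is symmetric while $(I-A^2)^{-1}A$ is skew. Since $d\psi$ is also skew, the product of the symmetric summand of $(I-A)^{-1}$ with $d\psi$ has vanishing trace; only the skew summand $(I-A^2)^{-1}A$ survives. Converting the trace of two skew $(1,1)$-tensors into the 2-form inner product (this contributes a factor of $-2$ that cancels the $-\tfrac{1}{2}$ above) yields
$$\frac{d}{dt}\bigg|_{t=0}E_{BI}(F(t)) = \int_M \Bigl\langle \sqrt{\det(I-A)}\,(I-A^2)^{-1}A,\; d\psi\Bigr\rangle\,dV_g.$$

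Integrating by parts and requiring this to vanish for all $\psi$ gives the Euler-Lagrange equation, once the skew $(1,1)$-tensor $(I-A^2)^{-1}A$ is identified with the 2-form $g\,(I-A^2)^{-1}A = g(I - (g^{-1}F)^2)^{-1}g^{-1}F$ by lowering the free upper index with $g$. The main obstacle is the bookkeeping: verifying that the quantity inside $d^*$ is genuinely a 2-form (equivalently, that $(I-A^2)^{-1}A$ is $g$-skew, which follows from $A$ commuting with every even power of itself) and tracking the $\sqrt{\det g}$ factors hidden in the identity $\sqrt{\det(g-F)}\,dx = \sqrt{\det(I - g^{-1}F)}\,dV_g$. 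Both issues are handled cleanly by fixing an orthonormal frame at each point and invoking tensoriality of the final expression.
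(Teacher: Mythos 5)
Your plan is correct and, at the level of overall strategy, coincides with the paper's: compute the first variation along exact perturbations $F + t\,d\psi$, apply Jacobi's formula after factoring $\det(g-F) = \det(g)\det(I - g^{-1}F)$, observe that only the antisymmetric part of $(I-A)^{-1}$ (where $A = g^{-1}F$) pairs nontrivially with the skew perturbation, identify that part as $(I-A^2)^{-1}A$, and integrate by parts. The one place you genuinely diverge is in how the identity $\bigl((I-A)^{-1}\bigr)_{aSym} = (I-A^2)^{-1}A$ is established. The paper expands $(I-tA)^{-1}$ as a Neumann series for small $t$, sorts terms by the parity of the power of $A$ (even powers symmetric, odd powers skew), resums the odd part to $(I - (tA)^2)^{-1}tA$, and then extends the identity from $t \in [0,\epsilon]$ to $t=1$ by a polynomial-identity continuation argument. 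Your factorization $(I-A)^{-1} = (I-A^2)^{-1}(I+A) = (I-A^2)^{-1} + (I-A^2)^{-1}A$ delivers the same decomposition in one line, with no convergence radius and no continuation step, since it holds whenever $I-A$ is invertible; the symmetry of $(I-A^2)^{-1}$ and the skewness of $(I-A^2)^{-1}A$ are immediate from $A^T = -A$ together with the fact that $A$ commutes with $(I-A^2)^{-1}$. This is a cleaner route to the same key lemma. Your sign bookkeeping also checks out: the $-\tfrac12$ from differentiating the square root cancels against the $-2$ arising when the trace pairing of two skew endomorphisms is rewritten as the inner product of the corresponding $2$-forms, so the first variation lands exactly on $\bigl\langle \sqrt{\det(I-A)}\,g(I-A^2)^{-1}g^{-1}F,\, d\psi\bigr\rangle_{L^2}$ as required.
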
 
\begin{proof}
We take a variation of the form $F(t) = F_0 + tdA$. 
\begin{align*}
\frac{d}{dt} \bigg|_{t=0} \int_M \sqrt{\det(g-F)} 
& = \frac{d}{dt} \bigg|_{t=0} \int_M \sqrt{\det(I-g^{-1}F)} dV_g \\
& = \int_M \frac{\frac{d}{dt}|_{t=0} \det(I-g^{-1}F)}{\sqrt{\det(I-g^{-1}F_0)}}dV_g \\
& = \int_M \frac{\det(I-g^{-1}F_0)\frac{d}{dt}|_{t=0} \det((I-g^{-1}F_0)^{-1}g^{-1}tdA)}{\sqrt{\det(I-g^{-1}F_0)}}dV_g \\
& = \int_M \sqrt{\det(I-g^{-1}F_0)} \det((I-g^{-1}F_0)^{-1} g^{-1}dA) dV_g \\
& = \left< \sqrt{\det(I-g^{-1}F_0)} g(I - g^{-1}F_0)^{-1}, dA \right>_{L^2}.
\end{align*}
Now let $\tilde{F} = \sqrt{\det(I-g^{-1}F_0)} g(I - g^{-1}F_0)^{-1}$. We can decompose $\tilde{F}$ into its symmetric and antisymmetric components, $\tilde{F} = \tilde{F}_{Sym} + \tilde{F}_{aSym}$. We now note that because $dA$ is antisymmetric, $\left< \tilde{F}_{Sym}, dA \right> \equiv 0$. Thus, the expression reduces to $\left< \tilde{F}_{aSym}, dA \right> = 0$. We now claim that $\forall t$
$$\left( (I-tg^{-1}F_0)^{-1} \right)_{aSym} = \left( I - (tg^{-1}F_0)^2\right)^{-1}g^{-1}F_0.$$
As this is a pointwise calculation, we reduce to normal coordinates, setting $g^{-1} = I$. Then, when $t \in [0,\epsilon]$ for small enough $\epsilon$, we can expand the LHS as a series
$$(I-tF_0)^{-1} = \sum_{n=0}^{\infty} \left( tF_0 \right)^n.$$
As $F_0$ is antisymmetric, $F_0^n$ is symmetric for even $n$ and antisymmetric for odd $n$. Thus,
\begin{align*}
\left( (I-tF_0)^{-1} \right)_{aSym} & = \sum_{n=0}^{\infty} \left( tF_0 \right)^{2n+1} \\
& = \sum_{n=0}^{\infty} \left( tF_0 \right)^{2n} tF_0 \\
& = \left( I - (tF_0)^2 \right)^{-1} tF_0.
\end{align*}
For each component of these matrices, this equation gives us a polynomial equation in $t$. As this identity holds on the interval $[0,\epsilon]$, the two polynomials must be equal and the identity necessarily holds for all $t \in \mathbb{R}$. In particular, for $t=1$, we obtain the identity
$$\left( (I-g^{-1}F_0)^{-1} \right)_{aSym} = \left( I - (g^{-1}F_0)^2\right)^{-1}g^{-1}F_0.$$
Returning to our Euler Lagrange equation, we find
\begin{align*} 
\frac{d}{dt} \bigg|_{t=0} \int_M \sqrt{\det(g-F)} 
& = \left< \sqrt{\det(I-g^{-1}F_0)}g \left(I - (g^{-1} F_0)^2\right)^{-1} g^{-1} F_0 , dA \right>_{L^2} \\
& = \left< d^* \left( \sqrt{\det(I-g^{-1}F_0)} g\left(I - (g^{-1} F_0)^2\right)^{-1} g^{-1}F_0 \right), A \right>_{L^2}.
\end{align*}
As $A$ is arbitrary, we obtain the equation
$$d^* \left( \sqrt{\det(I - g^{-1}F_0)} g \left(I - (g^{-1} F_0)^2\right)^{-1}g^{-1} F_0 \right) = 0.$$
\end{proof}
We now turn to the question of the existence of Born Infeld solutions in every cohomology class. We first show that self-dual and anti-self-dual $F$ are always Born Infeld solutions. On the other hand, we use our results from Section \ref{examples} to show that there exist cohomology classes where the Born Infeld solution becomes singular. 
\begin{theorem}
Every self-dual or anti-self-dual $F$ is a solution of the Born Infeld equations. 
\end{theorem}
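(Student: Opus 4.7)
The plan is to avoid a direct verification of the Euler--Lagrange equation derived in the previous proposition and instead argue by a variational comparison with the standard Hodge energy. Let $F$ be self-dual; the anti-self-dual case follows by the same argument (or, if preferred, by reversing orientation). The starting point is Corollary \ref{energy}, which gives the inequality $E_{BI}(G) \leq \text{Vol}(M) + \tfrac{1}{2}E_H(G)$ for every $G \in \Lambda^2(M)$, with equality at $G = F$ since $F$ is self-dual. Hence the functional $\Psi(G) := \text{Vol}(M) + \tfrac{1}{2}E_H(G) - E_{BI}(G)$ is non-negative on $\Lambda^2(M)$ and vanishes at $F$.

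Next I will observe that a self-dual closed $2$-form is automatically harmonic: since $*F = F$ and $dF = 0$, one has $d^*F = -*d*F = -*dF = 0$. Consequently $\langle F, dA\rangle_{L^2} = \langle d^*F, A\rangle_{L^2} = 0$ for every smooth $1$-form $A$, so the linear term in the expansion of the Hodge energy along $F_t := F + t\,dA$ drops out:
\[
E_H(F_t) \;=\; E_H(F) + 2t\,\langle F, dA\rangle_{L^2} + t^2 \|dA\|_{L^2}^2 \;=\; E_H(F) + t^2 \|dA\|_{L^2}^2 .
\]
Combining this with $\Psi(F_t) \geq 0 = \Psi(F)$ gives the one-sided bound
\[
E_{BI}(F_t) - E_{BI}(F) \;\leq\; \tfrac{1}{2}\bigl(E_H(F_t) - E_H(F)\bigr) \;=\; \tfrac{t^2}{2}\|dA\|_{L^2}^2 .
\]

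Finally, $E_{BI}(F_t)$ is a smooth function of $t$, since by the proposition above its integrand equals $\sqrt{1+|F_t|^2+\tfrac14|F_t\wedge F_t|^2}$, which is smooth in $t$ and stays bounded below away from zero. A smooth real-valued $\varphi(t)$ with $\varphi(0)=0$ and $\varphi(t)\leq Ct^2$ must satisfy $\varphi'(0)=0$; applied to $\varphi(t):=E_{BI}(F_t)-E_{BI}(F)$, this yields $\tfrac{d}{dt}\bigl|_{t=0}E_{BI}(F_t)=0$ for every exact perturbation $dA$. Hence $F$ satisfies the Euler--Lagrange equation of the previous proposition, i.e.\ $F$ is a Born Infeld solution. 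I do not expect a substantial obstacle: the whole proof is a ``touching from above'' trick that converts the equality case in Corollary \ref{energy} into first-order criticality, with the harmonicity of $F$ serving only to kill the linear term in the Hodge expansion.
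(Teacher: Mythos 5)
Your proof is correct, and it takes a genuinely different route from the paper's. The paper argues pointwise: working in normal coordinates and diagonalizing $F$ via the spectral normal form $F_{12}\,dx^1\wedge dx^2 + F_{34}\,dx^3\wedge dx^4$ with $F_{12}^2 = F_{34}^2$, it verifies the explicit identity $\sqrt{\det(I-g^{-1}F)}\,g\,(I-(g^{-1}F)^2)^{-1}g^{-1}F = F$, so that the Born--Infeld equation collapses to $d^*F=0$, which holds because (anti-)self-dual closed forms are harmonic. You instead run a variational ``touching from above'' argument: the equality case of Corollary \ref{energy} makes $G \mapsto \mathrm{Vol}(M)+\tfrac12 E_H(G)-E_{BI}(G)$ a nonnegative functional vanishing at $F$, harmonicity kills the linear term of $E_H$ along $F+t\,dA$, and the resulting two-sided bound $E_{BI}(F_t)-E_{BI}(F)\le \tfrac{t^2}{2}\|dA\|_{L^2}^2$ forces the first variation to vanish (your elementary lemma about $\varphi(0)=0$, $\varphi(t)\le Ct^2$ is sound, since the difference quotient is squeezed from both sides of $t=0$). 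Both proofs use the same two inputs --- the equality case $|F_+|^2=|F_-|^2 \Rightarrow |F\wedge F|=|F|^2$ and the coclosedness of closed (anti-)self-dual forms --- but your argument is softer and avoids all matrix algebra, at the cost of yielding only criticality; the paper's computation additionally exhibits the pointwise identity $\tilde F = F$, i.e.\ that the nonlinear constitutive field literally linearizes on (anti-)self-dual forms, which is a stronger structural fact than the vanishing of the first variation.
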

\begin{proof}
We claim that in the (anti-)self-dual case,
$$\sqrt{\det(I-g^{-1}F)}g(I-(g^{-1}F)^2)^{-1}g^{-1}F = F.$$
We will study the expression at a point in normal coordinates. Our expression reduces to $\sqrt{\det(I-F)}(I-F^2)^{-1}F$. We begin by noting that in the (anti-)self-dual case, as described above in the proof of Corollary \ref{energy}, 
$$\sqrt{\det(I - F)} \equiv 1 + \frac{1}{2}|F|^2.$$
We now claim that in the (anti-)self-dual case,
$$ \left( I - F^2 \right)^{-1} = \frac{1}{1+\frac{1}{2}|F|^2} I.$$
We note that by the spectral theorem for antisymmetric matrices, we can pick  special coordinates about $p$ such that $F(p) = F_{12} dx^1 \wedge dx^2 + F_{34} dx^3 \wedge dx^4$. Then
$$ I - F^2 = \left( \begin{array}{cccc}
1+F_{12}^2 & 0 & 0 & 0 \\ 
0 & 1+F_{12}^2 & 0 & 0 \\ 
0 & 0 & 1+F_{34}^2 & 0 \\ 
0 & 0 & 0 & 1+F_{34}^2
\end{array} \right) .$$
The (anti-)self-dual condition tells us that $F_{12}^2 = F_{34}^2$. So $1+F_{12}^2 = 1+F_{34}^2 = 1+ \frac{1}{2} |F|^2$. Thus, we find that 
$$(I-F^2)^{-1} = \frac{1}{1 + \frac{1}{2}|F|^2} I.$$
Therefore, in the (anti-)self-dual case, the Born Infeld equation reduces to
$$d^* \left( \sqrt{\det(I - g^{-1}F_0)} g \left(I - (g^{-1} F_0)^2\right)^{-1} g^{-1} F_0 \right) = d^*F =0.$$
Thus, (anti-)self-dual forms, which are necessarily harmonic, satisfy the Born Infeld equation.
\end{proof}
We next turn to an example of a cohomology class where the Born Infeld solution is singular. We use the notation of Section \ref{examples}. 
\begin{theorem}
Let $M = S^2 \times S^2$ with metric $g$ given by $g = h^{-2}(\theta)g_E$ as defined in Section \ref{examples}. For $|c| \leq c^* = h^{-2}(0)$, let
$$F^c = \frac{c}{\sqrt{1-c^2h^{4}(\theta)}} dV_{S^2_2}, \quad \text{ and } 
\quad \kappa_c = \frac{1}{2\pi} \int_0^{\pi} \frac{c}{\sqrt{1-c^2h^{4}(\theta)}} \sin(\theta) d \theta.$$
Then, when $|c| < c^*$, $F^c$ is a minimum of the Born Infeld energy in its cohomology class $[\kappa_c dV_{S^2_2}]$. Furthermore, if $h$ is chosen so that $\kappa_{c^*} < \infty$, the cohomology class $[\kappa_{c^*} dV_g]$ has no smooth Born Infeld minimizer.
\end{theorem}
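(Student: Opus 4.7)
The proof splits into two parts. For the first statement, I combine the equality clause of Corollary \ref{energy} with the GMS-minimization property of $F^c$. Since $F^c = f_c(\theta) dV_{S^2_2}$ is a scalar multiple of the pulled-back top form $dV_{S^2_2}$, we have $F^c \wedge F^c \equiv 0$ pointwise, so Corollary \ref{energy} gives $E_{BI}(F^c) = E_{GMS}(F^c)$. For any competitor $G \in [\kappa_c dV_{S^2_2}]$, Corollary \ref{energy} together with the GMS uniqueness in Theorem \ref{explicit} yields
$$E_{BI}(G) \geq E_{GMS}(G) \geq E_{GMS}(F^c) = E_{BI}(F^c),$$
so $F^c$ minimizes $E_{BI}$ in its class.

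For the second statement I argue by contradiction. Suppose $F$ is a smooth minimizer of $E_{BI}$ in $[\kappa_{c^*} dV_{S^2_2}]$. The plan is to trap both $E_{BI}(F)$ and $E_{GMS}(F)$ against the common value $E_{GMS}(F^{c^*})$ using two competitor families, and then extract a GMS Euler-Lagrange equation for $F$ that contradicts Theorem \ref{explicit}. For the upper bound, the smooth forms
$$G_c := F^c + (\kappa_{c^*} - \kappa_c) dV_{S^2_2} = \bigl(f_c(\theta) + (\kappa_{c^*} - \kappa_c)\bigr) dV_{S^2_2} \in [\kappa_{c^*} dV_{S^2_2}]$$
still satisfy $G_c \wedge G_c \equiv 0$, so $E_{BI}(G_c) = E_{GMS}(G_c)$. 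As $c \to (c^*)^-$, $G_c \to F^{c^*}$ pointwise a.e., and a dominated convergence argument (with the $L^1$ majorant $\sqrt{1+C} + \sqrt{2}\,h^2 f_{c^*}$) gives $E_{BI}(F) \leq E_{BI}(G_c) \to E_{GMS}(F^{c^*})$. For the lower bound, the rescaling $\tilde F_c := (\kappa_c/\kappa_{c^*}) F \in [\kappa_c dV_{S^2_2}]$ satisfies $E_{GMS}(\tilde F_c) \geq E_{GMS}(F^c)$ by Theorem \ref{explicit}; uniform convergence $\tilde F_c \to F$ together with monotone convergence $E_{GMS}(F^c) \nearrow E_{GMS}(F^{c^*})$ pass to the limit to yield $E_{GMS}(F) \geq E_{GMS}(F^{c^*})$.

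Combined with $E_{BI}(F) \geq E_{GMS}(F)$ from Corollary \ref{energy}, the two bounds force $E_{BI}(F) = E_{GMS}(F) = E_{GMS}(F^{c^*})$, so $F$ realizes the infimum of $E_{GMS}$ over all smooth forms in $[\kappa_{c^*} dV_{S^2_2}]$. Taking the first variation exactly as in the initial lemma of Section \ref{prelim}, $F$ must then satisfy the GMS equation $d^*\bigl(F/\sqrt{1+|F|^2}\bigr) = 0$ and so is a smooth GMS solution in $[\kappa_{c^*} dV_{S^2_2}]$, contradicting Theorem \ref{explicit}. The main technical obstacle is ensuring $E_{GMS}(F^{c^*}) < \infty$ so that both convergence arguments go through; this is where the hypothesis $\kappa_{c^*} < \infty$ and the dimensional condition $k = 2$ enter crucially. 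The asymptotics $1 - (c^*)^2 h^4(\theta) \sim \theta^2$ near $\theta = 0$ (from the corollary of Section \ref{examples}) give $|F^{c^*}| \sim \theta^{-1}$, and in dimension $k = 2$ the $\sin\theta$ factor in $dV_g$ cancels this singularity to make $\sqrt{1+|F^{c^*}|^2}\,dV_g \sim d\theta$ integrable, in exact parallel with the finiteness of $\kappa_{c^*}$.
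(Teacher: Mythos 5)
Your proof is correct and follows the same overall skeleton as the paper's: part one is identical in substance (the paper decomposes $F^c$ into self-dual and anti-self-dual parts and notes $|F^c_+|\equiv|F^c_-|$, which is exactly your observation that $F^c\wedge F^c\equiv 0$, and then invokes Corollary \ref{energy} together with GMS minimality), and part two is the same sandwich-plus-contradiction argument pinning $\inf E_{BI}$ and $\inf E_{GMS}$ to the common value $E_{GMS}(F^{c^*})$ and then contradicting Theorem \ref{explicit}. The one genuine difference is your choice of competitor families, and it is an improvement: the paper uses the multiplicative rescalings $\mu_c^{-1}F^c$ and $\mu_c F$ with $\mu_c = c/c^*$ and asserts these lie in $[\kappa_{c^*}dV_g]$ and $[\kappa_c dV_g]$ respectively, but since $\kappa_c$ is not linear in $c$ this is not literally true ($\mu_c^{-1}F^c$ lies in $[(c^*/c)\kappa_c\, dV_g]$). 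Your additive correction $G_c = F^c + (\kappa_{c^*}-\kappa_c)dV_{S^2_2}$ and your rescaling by $\kappa_c/\kappa_{c^*}$ land exactly in the intended cohomology classes while preserving the key structural facts ($G_c\wedge G_c\equiv 0$ for the upper bound, membership in $[\kappa_c dV_{S^2_2}]$ for the lower bound), so your version repairs this imprecision at no cost. Two minor presentational points: to conclude that $F$ minimizes $E_{GMS}$ over all smooth forms in the class you should state explicitly that the rescaling argument applies to an arbitrary smooth competitor $G$, not only to $F$; and your domination bound should record that $\int_M h^2 f_{c^*}\, dV_{g_h} = \int_M h^{-2} f_{c^*}\, dV_{g_E}$ is finite precisely because $\kappa_{c^*}<\infty$, which you correctly identify as the crux.
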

\begin{proof}
We begin with the case $|c| < c^*$. We decompose $F^c$ into its self-dual and anti-self-dual parts.
$$\frac{c}{\sqrt{1-c^2h^{4}(\theta)}} dV_{S^2_2} = \frac{c}{\sqrt{1-c^2h^{4}(\theta)}} \left[ \frac{1}{2}(dV_{S^2_1} + dV_{S^2_2}) - \frac{1}{2}(dV_{S^2_1} - dV_{S^2_2}) \right]. $$
We note that $|F^c_+| \equiv |F^c_-|$. Thus, by Corollary \ref{energy}, we find that $E_{GMS}(F^c) = E_{BI}(F^c)$. As $F^c$ minimizes $E_{GMS}$ and $E_{GMS} \leq E_{BI}$, we find that $F^c$ minimizes $E_{BI}$.

We now study the cohomology class $[\kappa_{c^*} dV_g]$ in the case $\kappa_{c^*} < \infty$. In this case, $F^{c^*}$ is a singular two form. However, both the GMS and Born Infeld energies are finite and are given by
$$E_{BI}(F^{c^*}) = E_{GMS}(F^{c^*}) = \int_M \sqrt{1+|F^{c^*}|^2} dV_g.$$
Through the rest of the section, $[\cdot]$ will refer only to the smooth forms in the given cohomology class. We claim that
$$ E_{BI}(F^{c^*}) = \inf_{F \in [\kappa_{c^*} dV_g]} E_{BI}(F).$$
We first show that 
$$E_{BI}(F^{c^*}) \leq \inf_{F \in [\kappa_{c^*} dV_g]} E_{BI}(F).$$
We prove by contradiction. Assume that $\exists F \in [\kappa_{c^*} dV_g]$ such that 
$$E_{BI}(F) < E_{BI}(F^{c^*}) - \delta.$$
Then, setting $\mu_c = \frac{c}{c^*}$ for $c < c^*$,
\begin{align*}
E_{BI}(\mu_c F) & = \int_M \sqrt{ 1 + \mu_c^2 |F|^2 + \mu_c^4 \frac{1}{4} |F \wedge F|^2 } dV_g \\
& < E_{BI}(F) \\
& < E_{BI}(F^{c^*}) - \delta.
\end{align*}
On the other hand, we note that by the dominated convergence theorem, as $c \rightarrow c^*$, 
$$E_{BI}(F^c) \rightarrow E_{BI}(F^{c^*}).$$
Thus, for $c^* - c$ small enough, $E_{BI}(F^c) > E_{BI}(F^{c^*}) - \delta > E_{BI} (\mu_c F)$. However, $E_{BI}(F^c)$ was shown to minimize $E_{BI}$ in the $[\kappa_c dV_g]$ cohomology class, a contradiction. 

We next show that 
$$E_{BI}(F^{c^*}) \geq \inf_{F \in [\kappa_{c^*}dV_g]} E_{BI}(F).$$
Once again setting $\mu_c = \frac{c}{c^*}$, we consider the sequence $\mu_c^{-1}F^c \in [\kappa_{c^*} dV_g]$. Then
$$ E_{BI}(F^c) \leq E_{BI}(\mu_c^{-1} F^c) \leq \mu_c^{-1} E_{BI}(F^c),$$
following from the definitions,
$$\int_M \sqrt{1+|F^c|^2}dV_g \leq \int_M \sqrt{1+\mu_c^{-2}|F^c|^2}dV_g \leq \int_M \mu_c^{-1} \sqrt{1+|F^c|^2}dV_g.$$
Because $E_{BI}(F^c) \rightarrow E_{BI}(F^{c^*})$ and $\mu_c^{-1} \rightarrow 1$ as $c \rightarrow c^*$, we find that as $c \rightarrow c^*$, 
$$ E_{BI}(\mu_c^{-1}F^c) \rightarrow E_{BI}(F^{c^*}).$$
Thus, the forms $\mu_c^{-1}F^c \in [\kappa_{c^*}dV_g]$ demonstrate that $E_{BI}(F^{c^*})$ is an upper bound on $\inf_{F \in [\kappa_{c^*}dV_g]} E_{BI}(F).$ We conclude that
$$E_{BI}(F^{c^*}) = \inf_{F \in [\kappa_{c^*}dV_g]} E_{BI}(F).$$ 
We now note that a similar proof can be repeated for $E_{GMS}$ to obtain
$$E_{GMS}(F^{c^*}) = \inf_{F \in [\kappa_{c^*}dV_g]} E_{GMS}(F).$$
We next show that this energy is not attained by any smooth form. Assume that $\exists F \in [\kappa_{c^*}dV_g]$ such that 
$$E_{BI}(F) = E_{BI}(F^{c^*}).$$
Then
$$E_{GMS}(F) \leq E_{BI}(F) = \inf_{F \in [\kappa_{c^*} dV_g]} E_{GMS}(F).$$
Thus, $F$ minimizes the $E_{GMS}$ energy. However, Theorem \ref{examples} shows that no such minimizer can exist. We conclude that $E_{BI}$ has no minimizer in $[\kappa_{c^*} dV_g]$. 
\end{proof}

\bibliographystyle{plain}
\bibliography{minform}
\end{document}